\documentclass[11pt]{amsart}
\usepackage[utf8]{inputenc}
\usepackage[T1]{fontenc}
\usepackage{amsfonts}
\usepackage{amssymb}
\usepackage{color}
\usepackage{graphicx,psfrag,rotating}
\usepackage{amsmath}
\usepackage{mathptmx}
\usepackage{relsize}

\newcommand\Z{\mathbb{Z}}

\newcommand\I{{\mathcal L}}

\allowdisplaybreaks[1]
\theoremstyle{plain}

\newtheorem{theorem}{Theorem}[section]

\newtheorem{lemma}[theorem]{Lemma}

\newtheorem*{main thm}{Main Theorem}

\theoremstyle{definition}
\newtheorem{definition}[theorem]{Definition}
\newtheorem{example}[theorem]{Example}

\newtheorem{remark}[theorem]{Remark}

\theoremstyle{remark}


\begin{document}

\newenvironment{prooff}{\medskip \par \noindent {\it Proof}\ }{\hfill
$\square$ \medskip \par}
    \def\sqr#1#2{{\vcenter{\hrule height.#2pt
        \hbox{\vrule width.#2pt height#1pt \kern#1pt
            \vrule width.#2pt}\hrule height.#2pt}}}
    \def\square{\mathchoice\sqr67\sqr67\sqr{2.1}6\sqr{1.5}6}
\def\pf#1{\medskip \par \noindent {\it #1.}\ }
\def\endpf{\hfill $\square$ \medskip \par}
\def\demo#1{\medskip \par \noindent {\it #1.}\ }
\def\enddemo{\medskip \par}
\def\qed{~\hfill$\square$}


\title[Multiple Curves on Punctured Orientable Surfaces]
{Multiple Curves on Punctured Orientable Surfaces}

\author[A. Meral]
{Alev Meral}

\subjclass[2010]{Primary:57N16, 57N37; Secondary:57N05}
\keywords{ Multiple Curves, Geometric Intersection Number, Punctured Orientable Surfaces }

\address{(A. M.) Department of Mathematics, D\.{ı}cle University, 21280
D\.{ı}yarbak{\i}r, Turkey}
\email{$alev.meral@dicle.edu.tr$}

\date{\today}

\begin{abstract}
We describe each multiple curve on the orientable surface of genus-$g$ with $n$ punctures and one boundary component by using this multiple curve's geometric intersection number with the embedded curves in this surface. 
\end{abstract}

\maketitle

\setcounter{secnumdepth}{2}
\setcounter{section}{0}

\section{Introduction}
One of the naive ways to describe multiple curves, which are the disjoint unions of finitely many essential simple closed curves on the standard punctured disk modulo isotopy,  is to use the geometric intersection numbers between the multiple curve and embedded arcs in the disk \cite{dynnikov02}. In \cite{meral19}, this way is generalized for such curve sistems on the orientable surface of genus-$1$ with $n$ ~($n \geq 2$) punctures and one boundary component. The coordinate system \cite{dynnikov02} obtained using this method was extensively used to solve various dynamical and combinatorial problems such as the word problem in the braid group \cite{dehornoy02}, \cite{dehornoy08} and calculate the topological entropy of an braid \cite{moussafir06}. The aim of this paper is to generalize the way which describes each multiple curve by using the geometric intersection numbers with the  embedded curves in the punctured orientable genus-$1$ surface with one boundary to the orientable surface of genus-$g$ ~($g \geq 1$) with $n$ punctures and one boundary component.

Throughout the paper, $S_{n,g}$ shall denote a genus-$g$ ~($g \geq 1$) surface with $n ~(n \geq 1)$ punctures and one boundary component. In order to describe a given multiple curve on $S_{n,g}$, a system consisting of $3n+7g-5$ arcs and $g$ simple  closed curves on $S_{n,g}$ is used. Given a multiple curve $\I$, we shall introduce a vector in $\Z^{3n+8g-5}_{\geq 0} \setminus \{0\}$ by using the geometric intersection numbers with the curves in our system and consider the linear combinations of these intersection numbers (see Section~\ref{intersection_numbers}).

\section{Geometric Intersection Numbers with Customized Curves Embedded in $S_{n,g}$}\label{intersection_numbers}
In this section, we shall describe the multiple curves on  $S_{n,g}$, whose geometric intersection numbers with the customized curves embedded in $S_{n,g}$ and directions are given. For this, we use the model shown in Figure~\ref{gen_model}.
\begin{figure}[!ht]
\centering
\psfrag{a1}[tl]{\scalebox{0.7}{$\scriptstyle{\alpha_{1}}$}}
\psfrag{a2}[tl]{\scalebox{0.7}{$\scriptstyle{\alpha_{2}}$}} 
\psfrag{a(2i-3)}[tl]{\scalebox{0.7}{$\scriptstyle{\alpha_{2i-3}}$}} 
\psfrag{a(2i-2)}[tl]{\scalebox{0.7}{$\scriptstyle{\alpha_{2i-2}}$}} 
\psfrag{a(2i-1)}[tl]{\scalebox{0.7}{$\scriptstyle{\alpha_{2i-1}}$}}
\psfrag{a(2i)}[tl]{\scalebox{0.7}{$\scriptstyle{\alpha_{2i}}$}} 
\psfrag{a(2i+1)}[tl]{\scalebox{0.7}{$\scriptstyle{\alpha_{2i+1}}$}} 
\psfrag{a(2i+2)}[tl]{\scalebox{0.7}{$\scriptstyle{\alpha_{2i+2}}$}} 
\psfrag{a(2n-1)}[tl]{\scalebox{0.7}{$\scriptstyle{\alpha_{2n-1}}$}}
\psfrag{a(2n)}[tl]{\scalebox{0.7}{$\scriptstyle{\alpha_{2n}}$}} 
\psfrag{b1}[tl]{\scalebox{0.7}{$\scriptstyle{\beta_{1}}$}} 
\psfrag{bi}[tl]{\scalebox{0.7}{$\scriptstyle{\beta_{i}}$}} 
\psfrag{b(i+1)}[tl]{\scalebox{0.7}{$\scriptstyle{\beta_{i+1}}$}} 
\psfrag{b(n+1)}[tl]{\scalebox{0.7}{$\scriptstyle{\beta_{n+1}}$}} 
\psfrag{b(n+2)}[tl]{\scalebox{0.7}{$\scriptstyle{\beta_{n+2}}$}} 
\psfrag{b(n+3)}[tl]{\scalebox{0.7}{$\scriptstyle{\beta_{n+3}}$}} 
\psfrag{b(n+g)}[tl]{\scalebox{0.7}{$\scriptstyle{\beta_{n+g}}$}} 
\psfrag{b(n+g-1)}[tl]{\scalebox{0.7}{$\scriptstyle{\beta_{n+g-1}}$}} 
\psfrag{b'(n+2)}[tl]{\scalebox{0.7}{$\scriptstyle{\beta^{'}_{n+2}}$}} 
\psfrag{b'(n+3)}[tl]{\scalebox{0.7}{$\scriptstyle{\beta^{'}_{n+3}}$}} 
\psfrag{b'(n+g)}[tl]{\scalebox{0.7}{$\scriptstyle{\beta^{'}_{n+g}}$}} 
\psfrag{b'(n+g-1)}[tl]{\scalebox{0.7}{$\scriptstyle{\beta^{'}_{n+g-1}}$}} 
\psfrag{c1}[tl]{\scalebox{0.7}{$\scriptstyle{c_1}$}}
\psfrag{c2}[tl]{\scalebox{0.7}{$\scriptstyle{c_2}$}}
\psfrag{c(g-1)}[tl]{\scalebox{0.7}{$\scriptstyle{c_{g-1}}$}}
\psfrag{cg}[tl]{\scalebox{0.7}{$\scriptstyle{c^{*}}$}}
\psfrag{g1}[tl]{\scalebox{0.7}{$\scriptstyle{\gamma_1}$}}
\psfrag{g2}[tl]{\scalebox{0.7}{$\scriptstyle{\gamma_2}$}}
\psfrag{gg}[tl]{\scalebox{0.7}{$\scriptstyle{\gamma_g}$}}
\psfrag{g(g-1)}[tl]{\scalebox{0.7}{$\scriptstyle{\gamma_{g-1}}$}}
\psfrag{k1}[tl]{\scalebox{0.7}{$\scriptstyle{\xi_1}$}}
\psfrag{k2}[tl]{\scalebox{0.7}{$\scriptstyle{\xi_2}$}}
\psfrag{k3}[tl]{\scalebox{0.7}{$\scriptstyle{\xi_3}$}}
\psfrag{k4}[tl]{\scalebox{0.7}{$\scriptstyle{\xi_4}$}}
\psfrag{k1'}[tl]{\scalebox{0.7}{$\scriptstyle{\xi^{'}_1}$}}
\psfrag{k2'}[tl]{\scalebox{0.7}{$\scriptstyle{\xi^{'}_2}$}}
\psfrag{k3'}[tl]{\scalebox{0.7}{$\scriptstyle{\xi^{'}_3}$}}
\psfrag{k4'}[tl]{\scalebox{0.7}{$\scriptstyle{\xi^{'}_4}$}}
\psfrag{k(2g-3)}[tl]{\scalebox{0.7}{$\scriptstyle{\xi_{2g-3}}$}}
\psfrag{k(2g-2)}[tl]{\scalebox{0.7}{$\scriptstyle{\xi_{2g-2}}$}}
\psfrag{k'(2g-3)}[tl]{\scalebox{0.7}{$\scriptstyle{\xi^{'}_{2g-3}}$}}
\psfrag{k'(2g-2)}[tl]{\scalebox{0.7}{$\scriptstyle{\xi^{'}_{2g-2}}$}}
\psfrag{d(2i-1)}[tl]{\scalebox{0.7}{$\scriptstyle{{\color{blue}\Delta_{2i-1}}}$}}
\psfrag{d(2i)}[tl]{\scalebox{0.7}{$\scriptstyle{{\color{blue}\Delta_{2i}}}$}}
\psfrag{d(2i-2)}[tl]{\scalebox{0.7}{$\scriptstyle{{\color{blue}\Delta_{2i-2}}}$}}
\psfrag{d(2i+1)}[tl]{\scalebox{0.7}{$\scriptstyle{{\color{blue}\Delta_{2i+1}}}$}}
\psfrag{d(2n)}[tl]{\scalebox{0.7}{$\scriptstyle{{\color{blue}\Delta_{2n}}}$}}
\psfrag{d1}[tl]{\scalebox{0.7}{$\scriptstyle{{\color{blue}\Delta_{1}}}$}}
\includegraphics[width=1.15\textwidth]{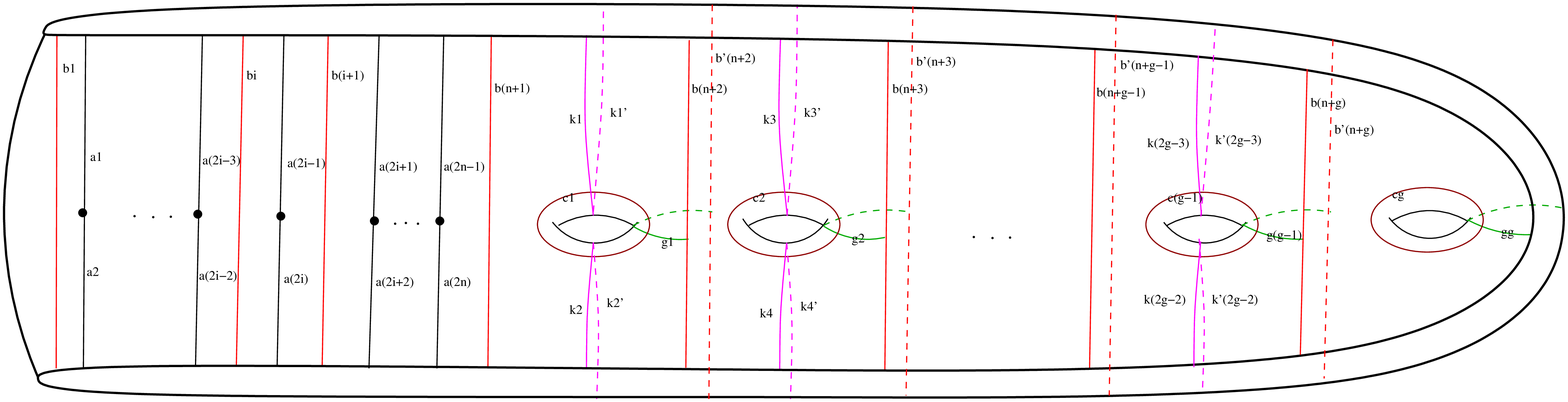}
\caption{ Curves on $S_{n,g}$ }\label{gen_model}
\end{figure} 
Here, the endpoints of arcs $\alpha_i  ~(1 \leq i \leq 2n)$, $\beta_i  ~(1 \leq i \leq n+g)$, $\beta^{'}_i  ~(n+2 \leq i \leq n+g)$, $\xi_i  ~(1 \leq i \leq 2g-2)$ and $\xi^{'}_i  ~(1 \leq i \leq 2g-2)$ are either on the boundary or on the puncture. While $c_i  ~(1 \leq i \leq g-1)$ and $c^{*}$ are the longitude of each torus respectively, $\gamma_i  ~(1 \leq i \leq g)$ is the arc whose both endpoints are on  $\beta_i$ and $\beta^{'}_i$ as depicted in Figure~\ref{gen_model} and  $\gamma_g$ is the arc whose both endpoints are on the boundary. Also, note that each $\gamma_i ~(1 \leq i \leq g-1)$ and $\gamma_g$ intersects each $c_i$ and $c^{*}$ respectively once transversally.

Let  $\I_{n,g}$ be the set of multiple curves on $S_{n,g}$ and $\I \in \I_{n,g}$.  Throughout the paper, we always work with the minimal representative (a multiple curve in the same isotopy class intersecting the customized curves embedded in $S_{n,g}$ minimally) of $\I$ and denote it by $L$. Let the vector $(\alpha_{1}, \cdots, \alpha_{2n}; \beta_{1}, \cdots, \beta_{n+g}; \beta^{'}_{n+2}, \cdots, \beta^{'}_{n+g}; \xi_{1}, \cdots, \linebreak \xi_{2g-2}; \xi^{'}_{1}, \cdots, \xi^{'}_{2g-2}; \gamma_{1}, \cdots, \gamma_{g}; c_{1}, \cdots, c_{g-1}; c^{*}) \in \{\Z^{3n+8g-5}_{\geq 0}\} \setminus \{0\}$ show the intersection numbers of $L$ with the corresponding arcs and the simple closed curves $c_{i}$ and $c^{*}.$ For example, $(5, 2, 5, 2, 4, 3; 7, 5, 7, 1, 5, 5; 5, 3; 6, 3, 5, 2; 4, 1, 4, 1; 2, 2, 3; 2, 0; 3)$ are the intersection numbers of the multiple curve $L$ depicted in Figure~\ref{inter_num_w_coor_curv}.
\begin{figure}[!ht]
\centering
\includegraphics[width=0.83\textwidth]{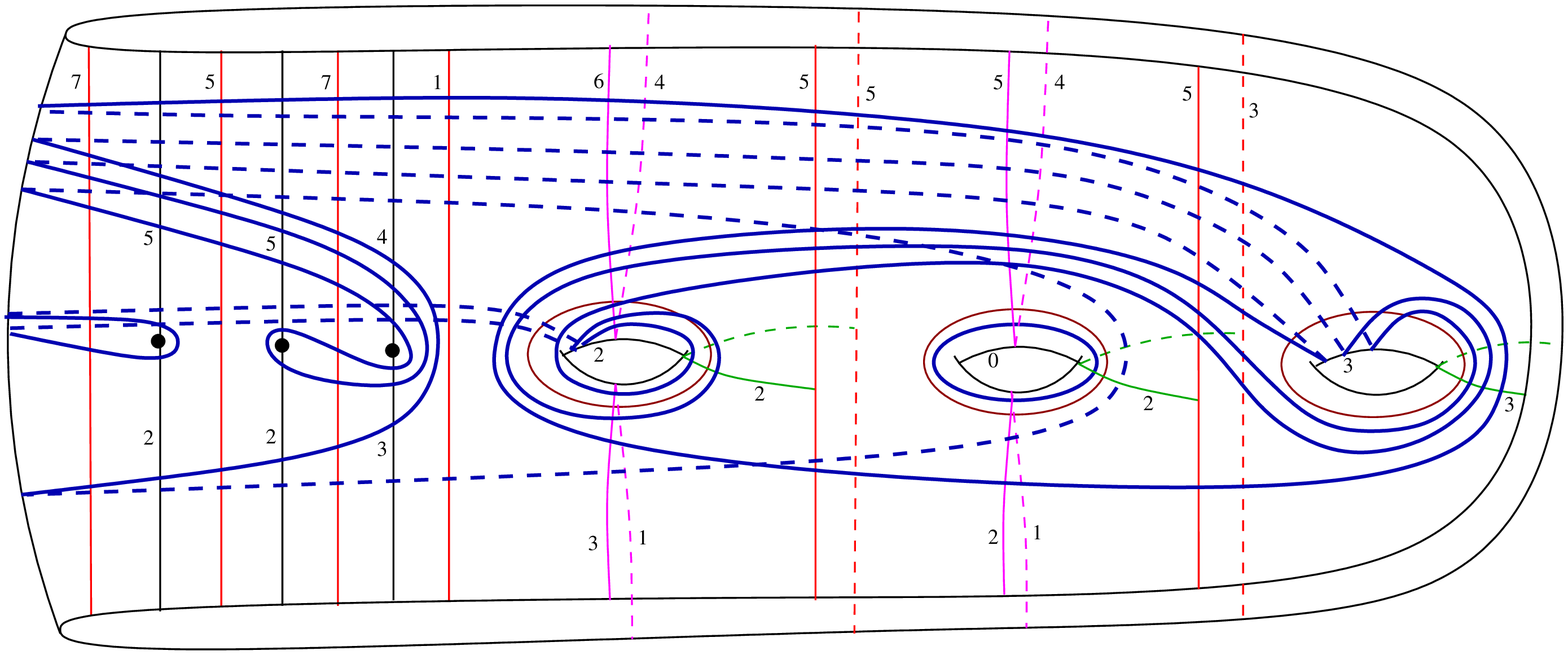}
\caption{ Intersection numbers with the curves embedded in $S_{3,3}$ }\label{inter_num_w_coor_curv}
\end{figure} 

\subsection{Path Components on $S_{n,g}$}
In this section, we shall introduce the path components of a multiple curve $L$ on $S_{n,g}$ and derive formulas for the number of these components.

Let $U_i  ~(1 \leq i \leq n)$ be the region that is bounded by $\beta_i$ and $\beta_{i+1}$, $G_{i}  ~(1 \leq i \leq g-1)$ be the region bounded by $\beta_{n+i}$, $\beta^{'}_{n+i}$, $\beta_{n+i+1}$ and $\beta^{'}_{n+i+1}$, and $G^{*}$ be the region bounded by $\beta_{n+g}$, $\beta^{*}_{n+g}$ and the boundary of $S_{n,g}$ ~($\partial S_{n,g}$). Each component of $L\cap U_i$, $L\cap G_i$  and $L\cap G^{*}$ is called the \emph{path component} of $L$ in  $U_i$, $G_i$ and $G^{*}$, respectively. Since $L$ is minimal, there are $4$ types of path components in the region $U_i$  as on the disk \cite{dynnikov02} (see Figure~\ref{puncture_bilesen_gen}). An \emph{above component} has endpoints on $\beta_i$ and $\beta_{i+1}$ and intersects $\alpha_{2i-1}$. A  \emph{below component} has endpoints on $\beta_i$ and $\beta_{i+1}$ and intersects $\alpha_{2i}$. A \emph{left loop component} has both endpoints  on $\beta_{i+1}$ and intersects $\alpha_{2i-1}$ and $\alpha_{2i}$ (Figure~\ref{puncture_bilesen_gen}a). A \emph{right loop component} has both endpoints on $\beta_i$ and intersects $\alpha_{2i-1}$ and $\alpha_{2i}$ (Figure~\ref{puncture_bilesen_gen}b).  There are $6$ types of path components in the region $G^{*}$. The first three of these are \emph{curve $c^{*}$}, which is the longitude of the torus in $G^{*}$ (Figure~\ref{genus_loop_gen}a); \emph{visible genus component}, which has both endpoints  on $\beta_{n+1}$  and does not intersect the curve $c^{*}$ (Figure~\ref{genus_loop_gen}b); \emph{invisible genus component}, which has both endpoints on $\beta_{1}$ and does not intersect the curve $c^{*}$  (Figure~\ref{genus_loop_gen}c).  The other three components are called \emph{twist}, which have endpoints on $\beta_{n+g}$ and $\beta^{'}_{n+g}$ and  intersect the curve $c^{*}$ (see Figure~\ref{cikis_yon_gen}). These components are non-twist, negative twist  and positive twist  components. The \emph{non-twist  component} does not make any twist  (see Figure~\ref{cikis_yon_gen}a). The \emph{negative twist component}  makes clockwise twist (see Figure~\ref{cikis_yon_gen}b). The \emph{positive twist  component}  makes counterclockwise twist (see Figure~\ref{cikis_yon_gen}c) \cite{meral19}. There are $14$ types of path components in each region $G_{i}$. These are \emph{curve $c_{i}$}, which is the longitude of the torus in $G_{i}$ (similar to Figure~\ref{genus_loop_gen}a); \emph{visible-left genus component}, which has both endpoints  on $\beta_{n+i+1}$  and does not intersect the curve $c_{i}$ (Figure~\ref{diagonal_left_right_loop}a); \emph{invisible-left genus component}, which has both endpoints  on $\beta^{'}_{n+i+1}$  and does not intersect the curve $c_{i}$ (Figure~\ref{diagonal_left_right_loop}a); \emph{visible-right genus component}, which has both endpoints  on $\beta_{n+i}$  and does not intersect the curve $c_{i}$ (Figure~\ref{diagonal_left_right_loop}b); \emph{invisible-right genus component}, which has both endpoints  on $\beta^{'}_{n+i}$  and does not intersect the curve $c_{i}$ (Figure~\ref{diagonal_left_right_loop}b); \emph{upper diagonal component}, which has endpoints on $\beta^{'}_{n+i}$ and $\beta_{n+i+1}$ and intersects the  curve $c_{i}$ and the arc $\xi_{2i-1}$ (see Figure~\ref{diagonal_left_right_loop}c); \emph{lower diagonal component}, which has endpoints on $\beta^{'}_{n+i}$ and $\beta_{n+i+1}$ and intersects the  curve $c_{i}$ and the arc $\xi_{2i}$ (see Figure~\ref{diagonal_left_right_loop}d); \emph{visible above component}, which has endpoints on $\beta_{n+i}$ and $\beta_{n+i+1}$ and intersects the arc $\xi_{2i-1}$ (see Figure~\ref{diagonal_left_right_loop}e); \emph{invisible above component}, which has endpoints on $\beta^{'}_{n+i}$ and $\beta^{'}_{n+i+1}$ and intersects the arc $\xi^{'}_{2i-1}$ (see Figure~\ref{diagonal_left_right_loop}e); \emph{visible below component}, which has endpoints on $\beta_{n+i}$ and $\beta_{n+i+1}$ and intersects the arc $\xi_{2i}$ (see Figure~\ref{diagonal_left_right_loop}f); \emph{invisible below component}, which has endpoints on $\beta^{'}_{n+i}$ and $\beta^{'}_{n+i+1}$ and  intersects the arc $\xi^{'}_{2i}$ (see Figure~\ref{diagonal_left_right_loop}f); \emph{negative twist component}, which has endpoints on $\beta^{'}_{n+i}$ and $\beta_{n+i}$ or $\beta^{'}_{n+i}$ and $\beta_{n+i+1}$ and intersects the curve $c_{i}$ and makes clockwise twist (see Figures~\ref{twist_components_gen}a and \ref{twist_components_gen}b); \emph{positive twist component}, which has endpoints on $\beta^{'}_{n+i}$ and $\beta_{n+i}$ or $\beta^{'}_{n+i}$ and $\beta_{n+i+1}$ and intersects the curve $c_{i}$ and makes counterclockwise twist (see Figures~\ref{twist_components_gen}c and \ref{twist_components_gen}d);  and \emph{non-twist  component} (see Figure~\ref{twist_components_gen}e).
\begin{figure}[!ht]
\centering
\psfrag{a}[tl]{\scalebox{0.6}{$\scriptstyle{a}$}}
\psfrag{b}[tl]{\scalebox{0.6}{$\scriptstyle{b}$}} 
\psfrag{a2i}[tl]{\scalebox{0.6}{$\scriptstyle{\alpha_{2i}}$}} 
\psfrag{a2i-1}[tl]{\scalebox{0.6}{$\scriptstyle{\alpha_{2i-1}}$}}
\psfrag{bi}[tl]{\scalebox{0.6}{$\scriptstyle{\beta_{i}}$}} 
\psfrag{bi+1}[tl]{\scalebox{0.6}{$\scriptstyle{\beta_{i+1}}$}} 
\psfrag{U(i)}[tl]{\scalebox{0.6}{$\scriptstyle{U_{i}}$}}
\includegraphics[width=0.6\textwidth]{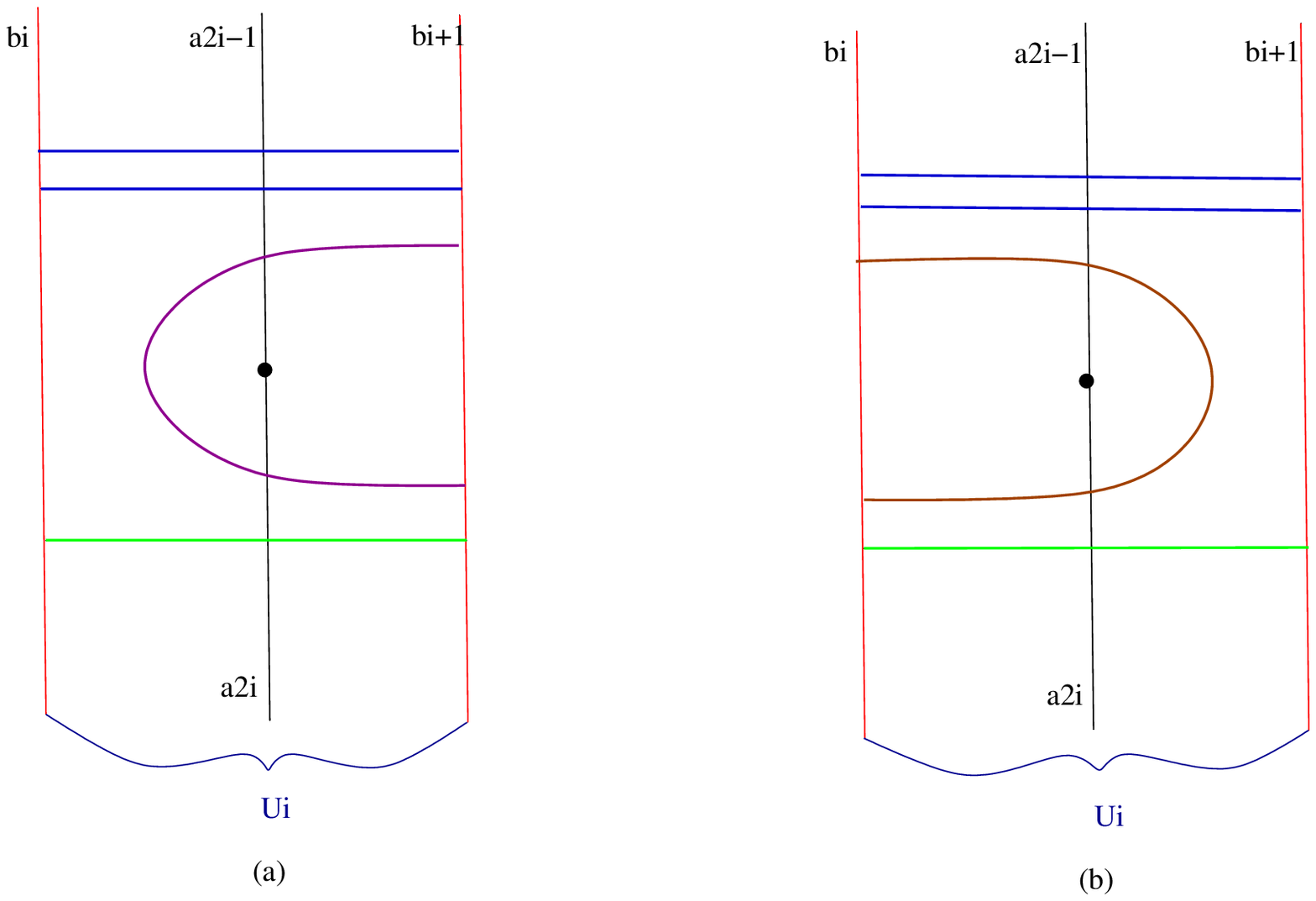}
\caption{Above and below components, left and right loop components in the region $U_i$}\label{puncture_bilesen_gen}
\end{figure}

\begin{figure}[!ht]
\centering
\psfrag{a}[tl]{\scalebox{0.55}{$\scriptstyle{a}$}}
\psfrag{b}[tl]{\scalebox{0.55}{$\scriptstyle{b}$}} 
\psfrag{g}[tl]{\scalebox{0.6}{$\scriptstyle{\gamma_{g}}$}}
\psfrag{t}[tl]{\scalebox{0.6}{$\scriptstyle{c^{*}}$}} 
\psfrag{bn+1}[tl]{\scalebox{0.6}{$\scriptstyle{\beta_{n+g}}$}}
\psfrag{b1}[tl]{\scalebox{0.6}{$\scriptstyle{\beta^{'}_{n+g}}$}} 
\includegraphics[width=0.97\textwidth]{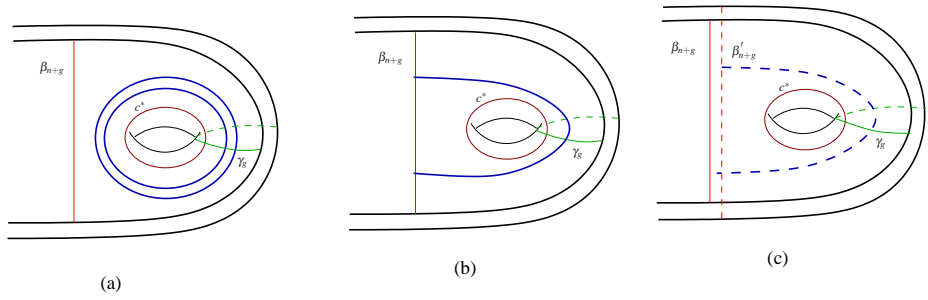}
\caption{(a) $c^{*}$ curves, (b) Visible genus component, (c) Invisible genus component in the region $G^{*}$ }\label{genus_loop_gen}
\end{figure}

\begin{figure}[!ht]
\centering
\psfrag{a}[tl]{\scalebox{0.55}{$\scriptstyle{a}$}}
\psfrag{b}[tl]{\scalebox{0.55}{$\scriptstyle{b}$}} 
\psfrag{g}[tl]{\scalebox{0.6}{$\scriptstyle{\gamma_{g}}$}} 
\psfrag{t}[tl]{\scalebox{0.6}{$\scriptstyle{c^{*}}$}} 
\psfrag{bn+1}[tl]{\scalebox{0.6}{$\scriptstyle{\beta_{n+g}}$}}
\psfrag{b1}[tl]{\scalebox{0.6}{$\scriptstyle{\beta^{'}_{n+g}}$}} 
\includegraphics[width=0.97\textwidth]{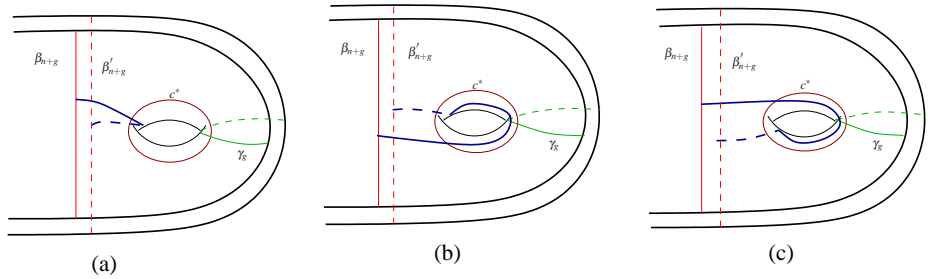}
\caption{ (a) Non-twist component, (b) Negative twist  component, (c) Positive twist  component.}\label{cikis_yon_gen}
\end{figure}

\begin{figure}[!ht]
\centering
\psfrag{a}[tl]{\scalebox{0.55}{$\scriptstyle{a}$}}
\psfrag{b}[tl]{\scalebox{0.55}{$\scriptstyle{b}$}} 
\psfrag{g}[tl]{\scalebox{0.6}{$\scriptstyle{\gamma_{i}}$}} 
\psfrag{x(2i-1)}[tl]{\scalebox{0.6}{$\scriptstyle{\xi_{2i-1}}$}} 
\psfrag{x'(2i-1)}[tl]{\scalebox{0.6}{$\scriptstyle{\xi^{'}_{2i-1}}$}} 
\psfrag{x(2i)}[tl]{\scalebox{0.6}{$\scriptstyle{\xi_{2i}}$}} 
\psfrag{x'(2i)}[tl]{\scalebox{0.6}{$\scriptstyle{\xi^{'}_{2i}}$}} 
\psfrag{t}[tl]{\scalebox{0.6}{$\scriptstyle{c_{i}}$}} 
\psfrag{b(n+i)}[tl]{\scalebox{0.6}{$\scriptstyle{\beta_{n+i}}$}}
\psfrag{b'(n+i)}[tl]{\scalebox{0.6}{$\scriptstyle{\beta^{'}_{n+i}}$}} 
\psfrag{b(n+i+1)}[tl]{\scalebox{0.6}{$\scriptstyle{\beta_{n+i+1}}$}}
\psfrag{b'(n+i+1)}[tl]{\scalebox{0.6}{$\scriptstyle{\beta^{'}_{n+i+1}}$}} 
\includegraphics[width=0.75\textwidth]{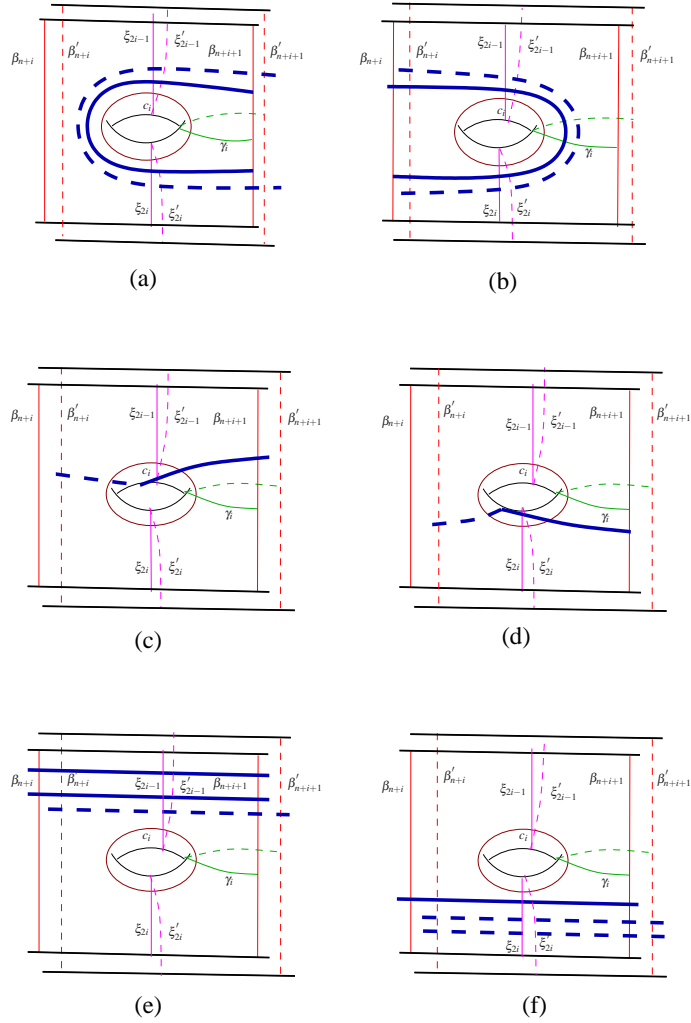}
\caption{ (a)  Visible-left and invisible-left genus components, (b) Visible-right and invisible-right genus components, (c) Upper diagonal component, (d) Lower diagonal component, (e) Visible above and invisible above components, (f) Visible below and invisible below components in the region $G_{i}$}\label{diagonal_left_right_loop}
\end{figure}

\begin{figure}[!ht]
\centering
\psfrag{a}[tl]{\scalebox{0.55}{$\scriptstyle{a}$}}
\psfrag{b}[tl]{\scalebox{0.55}{$\scriptstyle{b}$}} 
\psfrag{g}[tl]{\scalebox{0.6}{$\scriptstyle{\gamma_{i}}$}} 
\psfrag{x(2i-1)}[tl]{\scalebox{0.6}{$\scriptstyle{\xi_{2i-1}}$}} 
\psfrag{x'(2i-1)}[tl]{\scalebox{0.6}{$\scriptstyle{\xi^{'}_{2i-1}}$}} 
\psfrag{x(2i)}[tl]{\scalebox{0.6}{$\scriptstyle{\xi_{2i}}$}} 
\psfrag{x'(2i)}[tl]{\scalebox{0.6}{$\scriptstyle{\xi^{'}_{2i}}$}} 
\psfrag{t}[tl]{\scalebox{0.6}{$\scriptstyle{c_{i}}$}} 
\psfrag{b(n+i)}[tl]{\scalebox{0.6}{$\scriptstyle{\beta_{n+i}}$}}
\psfrag{b'(n+i)}[tl]{\scalebox{0.6}{$\scriptstyle{\beta^{'}_{n+i}}$}} 
\psfrag{b(n+i+1)}[tl]{\scalebox{0.6}{$\scriptstyle{\beta_{n+i+1}}$}}
\psfrag{b'(n+i+1)}[tl]{\scalebox{0.6}{$\scriptstyle{\beta^{'}_{n+i+1}}$}} 
\includegraphics[width=0.75\textwidth]{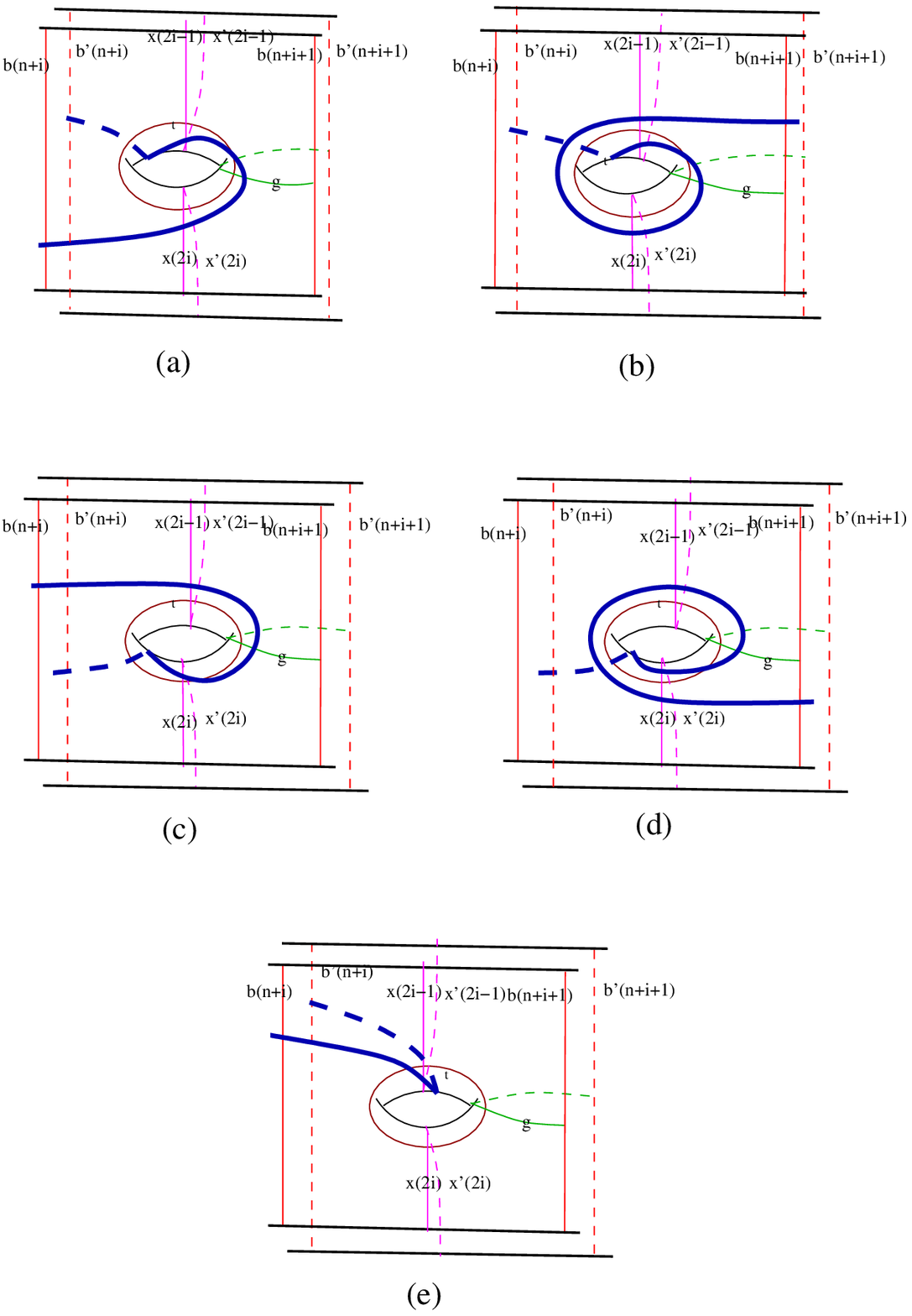}
\caption{ (a) and (b) Negative twist  component; (c) and (d) Positive twist  component; (e) Non-twist component in $G_{i}$.}\label{twist_components_gen}
\end{figure}

\begin{remark}\label{assuming}
For ease of calculation, throughout the paper, we shall assume that each diagonal  component (Figures~\ref{diagonal_left_right_loop}c and \ref{diagonal_left_right_loop}d) and twist component (Figure~\ref{twist_components_gen}) on $S_{n,g}$ intersect the arc $\xi_{2i-1}$ instead of the arc $\xi^{'}_{2i-1}$ and the arc $\xi_{2i}$ instead of the arc $\xi^{'}_{2i}$. Also, we shall assume that the invisible (dashed) parts of these components are only on the invisible-left side of $S_{n,g}$, as seen in the corresponding figures and that each $G_{i}$ has only one of the upper diagonal component or the lower diagonal component.

\end{remark}

\begin{remark}\label{not_c_cutting_gen}
Since a multiple curve  $L \in \I_{n,g}$ consists of the simple closed curves that do not intersect each other, there cannot be both  curve  $c_{i}$  and twist or diagonal components at the same time in the region $G_{i}$, and both curve $c^{*}$ and  twist components at the same time in the region $G^{*}$.
\end{remark}

\begin{definition}
Let $d^{u}_{2i-1}$ and $d^{l}_{2i}$ give the number of the upper and lower diagonal components in the region $G_{i}$ for $1 \leq i \leq g-1$, respectively. Also, let $c^{'}_{i}$ denote the number of the twist components in $G_{i}$. Thus,  throughout the paper, $c_{i}$ shall be defined as the sum of these components. That is, 
\begin{equation}\label{number_cutting}
c_{i} = c^{'}_{i} + d^{u}_{2i-1} + d^{l}_{2i}.
\end{equation}
Note that since there cannot be any diagonal components in $G^{*}$, here  $c_{i}$ shall be equal to only the number of the twist components in $G^{*}$, and in this case we shall denote $c_{i}$ with the number $c^{*}$.

\end{definition}

\begin{definition}\label{def_twist_num}
A twist component's \emph{twist number} is the signed number of intersections with the arc $\gamma_{i}  ~(1 \leq i \leq g)$.
\end{definition}

\begin{remark}\label{not_greater_1}
Since a multiple curve on $S_{n,g}$ does not contain any self-intersections, the  directions of the twists  have to be the same. Also, in the regions $G_{i}$ and $G^{*}$, the difference between the twist numbers of two different twist components  cannot be greater than $1$ \cite{meral19}.

If we denote the \emph{smaller twist number} by $t_{i}$ and the \emph{bigger twist number} by $t_{i}+1$, then the \emph{total twist number} ~$T_{i} ~(1 \leq i \leq g)$  in 
$G_{i} ~(1 \leq i \leq g-1)$ and $G^{*}$ is the sum of the twist numbers of twist components (see Figure~\ref{twist_components_gen}). Hence,  if the difference between the twist numbers of any two twist components is $0$, then  
$$T_{i} = t_{i}(c_{i} - d^{u}_{2i-1} - d^{l}_{2i}).$$  
On the other hand, if the difference between the twist numbers of any   two twist components is $1$, then $$ T_{i} = m_{i}(t_{i} + 1) + (c_{i} - d^{u}_{2i-1} - d^{l}_{2i} - m_{i})t_{i}, $$  \noindent 
where $m_{i} \in \Z_{\geq0}$ is the number of the twist components with the twist number  $t_{i} + 1$, and  $ c_{i} - d^{u}_{2i-1} - d^{l}_{2i} - m_{i} $ is the number of  the twist components with the twist number  $t_{i}$.
\end{remark}

\begin{remark}
Although $T_{i}$ gives the total twist number in each region $G_{i}$ and $G^{*}$, it cannot show the directions of twists by itself. Therefore, we first calculate the number of each $T_{i}$, and then we add a sign in front of $T_{i}$, denoting the negative direction by $-T_{i}$ and the positive direction by $T_{i}$. However, since only the total number of twists is required in the formulas throughout the paper, $|T_{i}|$ shall be used as the total number of twists in order not to cause any confusion.
\end{remark}

Now, we calculate the path components  of $L$ in the regions $G_{i}$ for $1 \leq i \leq g-1$ and  $G^{*}$ for $i = g$.

\begin{lemma}\label{lem_genus_loop_gen}
Let $L$ be given with the intersection numbers  $(\alpha; \beta; \beta^{'}; \xi; \xi^{'}; \gamma; c; c^{*})$, and the number of visible genus components and the number of invisible genus components in $G_{i}$ be $l_{i}$ and $l_{i}^{'} $, respectively. Also, let the number of visible genus components and the number of invisible genus components in $G^{*}$ be $l_{g}$ and $l_{g}^{'} $, respectively. Then for  $1 \leq i \leq g-1$,
\begin{eqnarray*}
l_{i} = \max\{0, \frac{|\beta_{n+i} - \beta_{n+i+1}| - c_{i}}{2}\} \quad \text{ and } \quad l_{g} = \frac{\beta_{n+g} - c^{*}}{2},
\end{eqnarray*}
\noindent and for $2 \leq i \leq g-1$,
\begin{eqnarray*}
l_{1}^{'} = \max\{0, \frac{|\beta_{1} - \beta_{n+2}^{'}| - c_{1}}{2}\}, \quad
l_{i}^{'} = \max\{0, \frac{|\beta^{'}_{n+i} - \beta^{'}_{n+i+1}| - c_{i}}{2}\} 
\end{eqnarray*}
\noindent and
\begin{eqnarray*}
l_{g}^{'} = \frac{\beta^{'}_{n+g} - c^{*}}{2}.
\end{eqnarray*}
Note that if $\beta_{n+i} < \beta_{n+i+1}$, the visible genus component in $G_{i}$ is left; if $\beta_{n+i} > \beta_{n+i+1}$, the visible genus component in $G_{i}$ is right. Similarly, if $\beta^{'}_{n+i} < \beta^{'}_{n+i+1}$, the invisible genus component in $G_{i}$ is left; if $\beta^{'}_{n+i} > \beta^{'}_{n+i+1}$, the invisible genus component in $G_{i}$ is right.
\end{lemma}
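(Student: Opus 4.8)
The plan is to count the intersection numbers $\beta_{n+i}$ and $\beta_{n+i+1}$ (and on the invisible side $\beta'_{n+i}$, $\beta'_{n+i+1}$) as sums of the endpoint-contributions of the individual path components classified in Figures~\ref{diagonal_left_right_loop} and \ref{twist_components_gen}, and then to solve the resulting linear relations for $l_i$ and $l'_i$. First I would record from the endpoint data how much each component type contributes to a given $\beta$-arc: a visible genus component contributes $2$ to exactly one visible arc ($\beta_{n+i}$ if visible-right, $\beta_{n+i+1}$ if visible-left) and $0$ to the other; a visible above or below component contributes $1$ to each of $\beta_{n+i}$ and $\beta_{n+i+1}$; and each of the $c_i=c'_i+d^u_{2i-1}+d^l_{2i}$ handle components (diagonal or twist; see \eqref{number_cutting}), which by Remark~\ref{assuming} has its dashed endpoint on the invisible-left arc $\beta'_{n+i}$, contributes $1$ to $\beta'_{n+i}$ on the invisible side and $1$ to a single visible arc on the visible side. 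Since the through-contribution (visible above plus below) is identical for $\beta_{n+i}$ and $\beta_{n+i+1}$, it cancels in the difference $\beta_{n+i}-\beta_{n+i+1}$, leaving only a genus-loop term and a handle term. Writing $\sigma l_i$ for the signed genus-loop count ($\sigma=\pm1$ according to the side) and $\delta$ for the net handle contribution, one gets $\beta_{n+i}-\beta_{n+i+1}=2\sigma l_i+\delta$ with $|\delta|\le c_i$, since the handle components total $c_i$.

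The geometric core, which I expect to be the main obstacle, is to pin down $l_i$ from this relation using minimality of $L$ together with the conventions of Remarks~\ref{assuming}, \ref{not_c_cutting_gen} and \ref{not_greater_1}. The point is that the through-components cancel and the handle term can shift the difference by at most $c_i$ in absolute value, so only genus loops can push $|\beta_{n+i}-\beta_{n+i+1}|$ beyond $c_i$; this already gives $l_i\ge\max\{0,(|\beta_{n+i}-\beta_{n+i+1}|-c_i)/2\}$. For the reverse inequality (equality) I would establish, by an innermost-bigon argument, two facts in a minimal representative: (i) visible genus loops occur on only one side (a visible-left and a visible-right genus component cannot coexist, exactly as in the disk case \cite{dynnikov02}), and (ii) the genus loops and the net handle term align on the same side, so that $\sigma$ and $\delta$ share a sign and $|\delta|$ is maximal, i.e.\ $|\delta|=c_i$, whenever $l_i>0$. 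The coherence of the handle family (Remark~\ref{not_greater_1}: common twisting direction, twist numbers differing by at most $1$) is what forces this alignment. Granting (i) and (ii): if $\beta_{n+i}>\beta_{n+i+1}$ the loops and handle endpoints pile on $\beta_{n+i}$ and $\beta_{n+i}-\beta_{n+i+1}=2l_i+c_i$, with the symmetric statement when $\beta_{n+i}<\beta_{n+i+1}$; when $l_i=0$ the difference is carried entirely by the handle term, so $|\beta_{n+i}-\beta_{n+i+1}|\le c_i$. Both outcomes are recorded uniformly by $l_i=\max\{0,(|\beta_{n+i}-\beta_{n+i+1}|-c_i)/2\}$, and the left/right assignment is read off from the sign of $\beta_{n+i}-\beta_{n+i+1}$.

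The invisible-side count for $l'_i$ is the same argument applied to $\beta'_{n+i}$ and $\beta'_{n+i+1}$; it is slightly cleaner because, by Remark~\ref{assuming}, every handle component has its dashed endpoint on $\beta'_{n+i}$, so all $c_i$ of them contribute to $\beta'_{n+i}$. The only bookkeeping subtlety is the boundary index $i=1$: there is no arc $\beta'_{n+1}$, and its role as the left invisible arc of $G_1$ is played by $\beta_1$, which is why $|\beta_1-\beta'_{n+2}|$ appears in the formula for $l'_1$. Finally, in $G^{*}$ there is a single visible arc $\beta_{n+g}$ and a single invisible arc $\beta'_{n+g}$, so no through-components and no left/right dichotomy arise; by Figure~\ref{cikis_yon_gen} each of the $c^{*}$ twist components contributes exactly $1$ to $\beta_{n+g}$ and $1$ to $\beta'_{n+g}$, giving the exact relations $\beta_{n+g}=2l_g+c^{*}$ and $\beta'_{n+g}=2l'_g+c^{*}$. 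No maximum with $0$ is needed here, and the nonnegativity $\beta_{n+g}\ge c^{*}$ (and likewise for $\beta'_{n+g}$) is automatic, which yields $l_g=(\beta_{n+g}-c^{*})/2$ and $l'_g=(\beta'_{n+g}-c^{*})/2$.
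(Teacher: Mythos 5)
Your proposal is correct and follows essentially the same route as the paper: you count the endpoint contributions of the classified path components on the arcs $\beta_{n+i}$, $\beta_{n+i+1}$ (and their primed counterparts), note that the above/below through-components cancel in the difference and that visible-left and visible-right genus loops cannot coexist, and solve the resulting relation $|\beta_{n+i}-\beta_{n+i+1}|=c_i+2l_i$ for $l_i$, with the $\max\{0,\cdot\}$ covering the case $c_i\ge|\beta_{n+i}-\beta_{n+i+1}|$. The only difference is that you explicitly flag and justify the alignment of the handle-component endpoints with the side carrying the genus loops (your step (ii)), a point the paper's proof asserts implicitly when it writes $\beta_{n+i+1}-\beta_{n+i}=c_i+2l_i^{L}$.
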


\begin{proof}
The absolute value of the difference between the intersection numbers on the arcs $\beta_{n+i}$ and $\beta_{n+i+1}$, namely $|\beta_{n+i} - \beta_{n+i+1}|$, gives us the sum of twist, diagonal and visible genus component numbers. If $\beta_{n+i} < \beta_{n+i+1}$, the arc $\beta_{n+i+1}$ intersects once with each twist component (Figures~\ref{twist_components_gen}b and  \ref{twist_components_gen}d) or diagonal component (Figures~\ref{diagonal_left_right_loop}c and \ref{diagonal_left_right_loop}d) and twice with each visible-left genus component (Figure~\ref{diagonal_left_right_loop}a). Let  the number of visible-left genus components  and the number of visible-right genus components be denoted by $l^{L}_{i}$  and $l^{R}_{i}$, respectively.  Hence,  $$\beta_{n+i+1} - \beta_{n+i} = c^{'}_{i} + d^{u}_{2i-1} + d^{l}_{2i} + 2l^{L}_{i}.$$  \noindent From Equation~(\ref{number_cutting}), $$ \beta_{n+i+1} - \beta_{n+i} = c_{i} + 2l^{L}_{i}.$$
\noindent Since a multiple curve consists of the simple closed curves that do not intersect  each other, this curve system contains only one of the visible-left genus components or visible-right genus components. Therefore, we can denote the number of both visible-left genus components and visible-right genus components as $l_{i}$. Thus, we can write 
\begin{equation}\label{visible-left}
\beta_{n+i+1} - \beta_{n+i} = c_{i} + 2l_{i}.
\end{equation}

If $\beta_{n+i} > \beta_{n+i+1}$, the arc $\beta_{n+i}$ intersects once each twist component (Figures~\ref{twist_components_gen}a, \ref{twist_components_gen}c and  \ref{twist_components_gen}e) and twice each visible-right genus component (Figure~\ref{diagonal_left_right_loop}b). From Remark~\ref{assuming}, there cannot be any diagonal component; otherwise, self-intersections occur in this curve system. Since $c_{i} = c^{'}_{i} + d^{u}_{2i-1} + d^{l}_{2i}$, here $$ \beta_{n+i} - \beta_{n+i+1} = c_{i} + 2l^{R}_{i}.$$  \noindent That is, 
\begin{equation}\label{visible-right}
\beta_{n+i} - \beta_{n+i+1} = c_{i} + 2l_{i}.
\end{equation}
\noindent From Equations~(\ref{visible-left}) and (\ref{visible-right}), we can write $ |\beta_{n+i} - \beta_{n+i+1}| = c_{i} + 2l_{i}$. Therefore, $l_{i} = \frac{|\beta_{n+i} - \beta_{n+i+1}| - c_{i}}{2}$. When $c_{i} \geq |\beta_{n+i} - \beta_{n+i+1}|$, there cannot be any visible genus component in multiple curve. Hence $l_{i} =  \max\{0, \frac{|\beta_{n+i} - \beta_{n+i+1}| - c_{i}}{2}\}$ is derived. Similarly, we can find $l^{'}_{1}$ and $l^{'}_{i}$. For the proofs of $l_{g}$ and $l^{'}_{g}$, you can look at \cite{meral19}.

\end{proof}

In the following lemma, we calculate the total twist number of twist components in each $G_{i}$ and $G^{*}$:

\begin{lemma}\label{lem_total_twist_gen}
Let $L$ be given with the intersection numbers  $(\alpha; \beta; \beta^{'}; \xi; \xi^{'}; \gamma; c; c^{*})$, denoting the signed total twist number of twist components in each $G_{i}$ and $G^{*}$ by $T_{i}$ and $T_{g}$, respectively. For ~$2 \leq i \leq g-1$, we have 
\begin{align}\label{total_twist_Ti}
|T_{i}|&= \left\{ \begin{array}{ll}
       0& \mbox{if $c_{i} = 0$},\\
         \gamma_{i} - \max\{0, \frac{\max\{0, \beta_{n+i} - \beta_{n+i+1}\} - c_{i}}{2}\} - \max\{0, \frac{\max\{0, \beta^{'}_{n+i} - \beta^{'}_{n+i+1}\} - c_{i}}{2}\} & \mbox{if $c_{i} \neq 0$.}   
         \end{array} \right.
\end{align}

\noindent For $i = 1$,
\begin{align}\label{total_twist_T1}
|T_{1}|&= \left\{ \begin{array}{ll}
       0& \mbox{if $c_{1} = 0$},\\
         \gamma_{1} - \max\{0, \frac{\max\{0, \beta_{n+1} - \beta_{n+2}\} - c_{1}}{2}\} - \max\{0, \frac{\max\{0, \beta_{1} - \beta^{'}_{n+2}\} - c_{1}}{2}\} & \mbox{if $c_{1} \neq 0$.}   
         \end{array} \right.
\end{align}

\noindent For $i = g$,
\begin{align}\label{total_twist_Tg}
|T_{g}|&= \left\{ \begin{array}{ll}
       0& \mbox{if $c^{*} = 0$},\\
         \gamma_{g} - \frac{\beta_{n+g} - c^{*}}{2} - \frac{\beta^{'}_{n+g} - c^{*}}{2} & \mbox{if $c^{*} \neq 0$.}   
         \end{array} \right.
\end{align}

\noindent
The sign of the negative twist component is $-1$ and the sign of the positive twist component is $1$. 
\end{lemma}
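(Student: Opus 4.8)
The plan is to compute the coordinate $\gamma_i$ (the geometric intersection number of $L$ with the arc $\gamma_i$) by summing the contributions of the individual path components of $L$ in the region $G_i$, and then to solve the resulting identity for $|T_i|$. First I would write $\gamma_i = \sum_{P} |P \cap \gamma_i|$, the sum running over all path components $P$ of $L$ in $G_i$, and determine for each type listed in Section~\ref{intersection_numbers} how many times it meets $\gamma_i$. Reading this off from Figures~\ref{diagonal_left_right_loop} and \ref{twist_components_gen} together with the conventions of Remark~\ref{assuming}, I expect exactly three contributing families: each twist component meets $\gamma_i$ in a number of points equal to the absolute value of its twist number; each visible-right genus component meets $\gamma_i$ once; and each invisible-right genus component meets $\gamma_i$ once. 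The remaining types --- the visible-left and invisible-left genus components, the upper and lower diagonal components, and the visible/invisible above and below components --- are disjoint from $\gamma_i$.

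By Definition~\ref{def_twist_num} the twist number of a twist component is precisely its signed intersection number with $\gamma_i$, and by Remark~\ref{not_greater_1} all twists in $G_i$ run in the same direction; hence the twist components together meet $\gamma_i$ in exactly $|T_i|$ points, where $T_i$ is the signed sum of their twist numbers. Writing $r_i$ and $r'_i$ for the numbers of visible-right and invisible-right genus components, the decomposition becomes $\gamma_i = |T_i| + r_i + r'_i$. It then remains to express $r_i$ and $r'_i$ in the given coordinates. From the proof of Lemma~\ref{lem_genus_loop_gen}, a visible genus component is of right type precisely when $\beta_{n+i} > \beta_{n+i+1}$, and in that case its count is $\frac{\beta_{n+i}-\beta_{n+i+1}-c_i}{2}$; packaging the two thresholds (right type, and $\beta_{n+i}-\beta_{n+i+1} > c_i$) into nested maxima gives $r_i = \max\{0, \frac{\max\{0,\beta_{n+i}-\beta_{n+i+1}\}-c_i}{2}\}$, and symmetrically for $r'_i$ on the invisible side. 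Substituting and solving for $|T_i|$ yields formula~(\ref{total_twist_Ti}).

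The case $c_i = 0$ is immediate: since $c_i = c'_i + d^{u}_{2i-1} + d^{l}_{2i}$ with all summands nonnegative, $c_i = 0$ forces $c'_i = 0$, so there are no twist components and $T_i = 0$. For $i = 1$ the only change is that the invisible boundary of the first handle is recorded by $\beta_1$ and $\beta^{'}_{n+2}$, which replaces the invisible max-term accordingly and gives~(\ref{total_twist_T1}). For the region $G^{*}$ ($i=g$) there is no left/right dichotomy among the genus components, so the nested maxima collapse to the plain counts $l_g = \frac{\beta_{n+g}-c^{*}}{2}$ and $l^{'}_g = \frac{\beta^{'}_{n+g}-c^{*}}{2}$ of Lemma~\ref{lem_genus_loop_gen}, and the same intersection count $\gamma_g = |T_g| + l_g + l^{'}_g$ gives~(\ref{total_twist_Tg}); this is the genus-one computation of \cite{meral19}. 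The sign convention ($-1$ for negative and $+1$ for positive twist components) is inherited directly from the sign in Definition~\ref{def_twist_num}.

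The step I expect to be the main obstacle is the geometric bookkeeping in the first paragraph: verifying, in minimal position and under the conventions of Remark~\ref{assuming}, that the diagonal components and the left-type genus components are genuinely disjoint from $\gamma_i$, while the right-type genus components each cross it exactly once. This is what forces the subtracted terms to be exactly the right-genus counts and nothing more, and it is the only place where the precise placement of $\gamma_i$ relative to the handle really enters the argument.
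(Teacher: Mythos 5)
Your proposal is correct and follows essentially the same route as the paper: both decompose the intersection number $\gamma_i$ as $|T_i|$ plus the counts of visible-right and invisible-right genus components (using Remark~\ref{not_c_cutting_gen} to exclude the curve $c_i$ when $c_i \neq 0$), then substitute the counts from Lemma~\ref{lem_genus_loop_gen} and solve for $|T_i|$. The extra care you take in verifying that diagonal and left-type components are disjoint from $\gamma_i$ is implicit in the paper's Equation~(\ref{gama_kesenler}) but is the right point to scrutinize.
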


\begin{proof}
Let us denote the total twist number of the twist components of $L$ in each $G_{i}$ by $|T_{i}|$.  Observe that the curve $\gamma_{i}$ intersects once the curve $c_{i}$ (Figure~\ref{genus_loop_gen}a) 
and it intersects once each visible-right  and invisible-right genus components (Figure~\ref{diagonal_left_right_loop}b).  Also, $\gamma_{i}$ intersects  $L$ by the total number of twists of the twist components (Figure~\ref{twist_components_gen}). However, from Remark~\ref{not_c_cutting_gen}, there cannot be 
twists and  the curve $c_{i}$ in $G_{i}$ at the same time.  Therefore, when $c_{i} \neq 0$, we have 
\begin{equation}\label{gama_kesenler}
\gamma_{i} = l_{i} + l_{i}^{'} + |T_{i}|,
\end{equation}
\noindent where $l_{i}$, ~$l_{i}^{'}$ and $|T_{i}|$ denote the number of visible-right genus, invisible-right genus components and the total twist number of the twist components in $G_{i}$, respectively.

Since a multiple curve consists of the simple closed curves that do not intersect  each other and from Definition~\ref{def_twist_num}, we can write
\begin{equation*}
\gamma_{i} = \max\{0, \frac{\max\{0, \beta_{n+i} - \beta_{n+i+1}\} - c_{i}}{2}\} + \max\{0, \frac{\max\{0, \beta^{'}_{n+i} - \beta^{'}_{n+i+1}\} - c_{i}}{2}\} + |T_{i}|.
\end{equation*}
\noindent Hence, we have Equality~(\ref{total_twist_Ti}) as follows
\begin{equation*}
|T_{i}| = \gamma_{i} - \max\{0, \frac{\max\{0, \beta_{n+i} - \beta_{n+i+1}\} - c_{i}}{2}\} - \max\{0, \frac{\max\{0, \beta^{'}_{n+i} - \beta^{'}_{n+i+1}\} - c_{i}}{2}\}.
\end{equation*}

Equalities~(\ref{total_twist_T1}) and (\ref{total_twist_Tg}) can be obtained similar to the above calculations and \cite{meral19}, respectively.

\end{proof}

\begin{remark}\label{when_diagonal_exist}
When there is one of the upper diagonal components or 
lower diagonal components in the region $G_{i}$, 
the  equation $c_{i} = d_{2i-1}^{u} + d_{2i}^{l} + |T_{i}|$ is 
used so that the curves on the surface do not intersect. In this case, $|T_{i}|$ cannot be greater than $c_i$.	
	
\end{remark}

By using the following lemma, we calculate the number of  the curves $c_{i}$ and $c^{*}$ (Figure~\ref{genus_loop_gen}a) in each region $G_{i} ~(1 \leq i \leq g-1)$ and $G^{*}$, respectively.
\begin{lemma}\label{c_curves_gen}
Let $L$ be given with the intersection numbers  $(\alpha; \beta; \beta^{'}; \xi; \xi^{'}; \gamma; c; c^{*})$. We find the number of  the curves $c_{i}$ and $c^{*}$ in $L$, denoting by $p(c_{i})$ and $p(c^{*})$, as follows. For $2 \leq i \leq g-1$, 
\begin{align}\label{number_pci}
p(c_{i})&= \left\{ \begin{array}{ll}
       \gamma_{i} - \max\{0, \frac{\max\{0, \beta_{n+i} - \beta_{n+i+1}\}}{2}\} - \max\{0, \frac{\max\{0, \beta^{'}_{n+i} - \beta^{'}_{n+i+1}\}}{2}\}& \mbox{if $c_{i} = 0$},\\
         0& \mbox{if $c_{i} \neq 0$.}   
         \end{array} \right.
\end{align}

\noindent For $i = 1$,
\begin{align}\label{number_pc1}
p(c_{1})&= \left\{ \begin{array}{ll}
       \gamma_{1} - \max\{0, \frac{\max\{0, \beta_{n+1} - \beta_{n+2}\}}{2}\} - \max\{0, \frac{\max\{0, \beta_{1} - \beta^{'}_{n+2}\}}{2}\}& \mbox{if $c_{1} = 0$},\\
         0& \mbox{if $c_{1} \neq 0$.}   
         \end{array} \right.
\end{align}

\noindent For $i = g$,
\begin{align}\label{number_pcg}
p(c^{*})&= \left\{ \begin{array}{ll}
       \gamma_{g} - \frac{\beta_{n+g}}{2} - \frac{\beta^{'}_{n+g}}{2} & \mbox{if $c^{*} = 0$},\\
         0& \mbox{if $c^{*} \neq 0$.}   
         \end{array} \right.
\end{align}

\end{lemma}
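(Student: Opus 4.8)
The plan is to reproduce the bookkeeping of Lemma~\ref{lem_total_twist_gen}, but now in the complementary regime where the torus-longitude curves $c_{i}$ are present in place of twists. The whole argument hinges on Remark~\ref{not_c_cutting_gen}: in any fixed region $G_{i}$ the longitude $c_{i}$ and the twist/diagonal components are mutually exclusive, and likewise for $c^{*}$ and the twist components in $G^{*}$. Accordingly I would split on whether the coordinate $c_{i}$ vanishes. If $c_{i}\neq0$, then by definition twist or diagonal components are present in $G_{i}$, so Remark~\ref{not_c_cutting_gen} forbids any copy of the longitude $c_{i}$ in $L$; hence $p(c_{i})=0$, which is the lower line of each of (\ref{number_pci}), (\ref{number_pc1}) and (\ref{number_pcg}).

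The substance is the case $c_{i}=0$. With no twists or diagonals present we have $|T_{i}|=0$, and every transverse intersection of $\gamma_{i}$ with $L$ arises either from a copy of the longitude $c_{i}$, crossed once (Figure~\ref{genus_loop_gen}a), or from a right genus component, crossed once (Figure~\ref{diagonal_left_right_loop}b); the left genus components are disjoint from $\gamma_{i}$. This gives the counting identity
\begin{equation*}
\gamma_{i} = p(c_{i}) + l_{i} + l_{i}',
\end{equation*}
the exact analogue of the relation $\gamma_{i}=l_{i}+l_{i}'+|T_{i}|$ used in Lemma~\ref{lem_total_twist_gen}, with the twist term replaced by the longitude term. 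Here $l_{i}$ and $l_{i}'$ are the numbers of visible-right and invisible-right genus components, which by Lemma~\ref{lem_genus_loop_gen} — upon setting $c_{i}=0$ and retaining only the right-hand contribution, i.e.\ the case $\beta_{n+i}>\beta_{n+i+1}$ — specialize to $\max\{0,\tfrac{\max\{0,\beta_{n+i}-\beta_{n+i+1}\}}{2}\}$ and $\max\{0,\tfrac{\max\{0,\beta_{n+i}'-\beta_{n+i+1}'\}}{2}\}$. Solving the displayed identity for $p(c_{i})$ yields the upper line of (\ref{number_pci}). The endpoint index $i=1$ is handled identically after replacing $\beta_{n+i}'$ by $\beta_{1}$, giving (\ref{number_pc1}); the index $i=g$ uses the arcs $\beta_{n+g}$, $\beta_{n+g}'$ and the longitude $c^{*}$, where no left/right alternative occurs, giving (\ref{number_pcg}) exactly as in \cite{meral19}.

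The only point requiring care — the main obstacle — is the directional bookkeeping. Since $\gamma_{i}$ meets the right genus components but not the left ones, the absolute value $|\beta_{n+i}-\beta_{n+i+1}|$ appearing in Lemma~\ref{lem_genus_loop_gen} must be replaced by the one-sided quantity $\max\{0,\beta_{n+i}-\beta_{n+i+1}\}$ in the present formulas. One must read off from Figure~\ref{diagonal_left_right_loop} that a visible-left (respectively invisible-left) genus component is disjoint from $\gamma_{i}$, so that only the right components enter the count, and confirm that when $c_{i}=0$ there are genuinely no twist contributions to $\gamma_{i}$. Once this directional check is in place, the entire statement reduces to solving the single linear identity above, and no further estimate is needed.
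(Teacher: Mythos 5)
Your proposal is correct and follows essentially the same route as the paper: the case $c_{i}\neq 0$ is dispatched by Remark~\ref{not_c_cutting_gen}, and the case $c_{i}=0$ rests on the same counting identity $\gamma_{i}=l_{i}+l_{i}^{'}+p(c_{i})$ with the right genus component counts substituted from Lemma~\ref{lem_genus_loop_gen}. Your extra care about the one-sided quantities $\max\{0,\beta_{n+i}-\beta_{n+i+1}\}$ (since $\gamma_{i}$ meets only the right, not the left, genus components) is exactly the point the paper relies on implicitly.
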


\begin{proof}
Whenever $c_{i} = 0$, we have $\gamma_{i} = l_{i} + l_{i}^{'} + p(c_{i})$. Since a multiple curve consists of the simple closed curves that do not intersect each other and from Definition~\ref{def_twist_num}, we can write
\begin{eqnarray*}
	\gamma_{i} = \max\{0, \frac{\max\{0, \beta_{n+i} - \beta_{n+i+1}\}}{2}\} + \max\{0, \frac{\max\{0, \beta^{'}_{n+i} - \beta^{'}_{n+i+1}\}}{2}\} + p(c_{i}).
\end{eqnarray*}
Hence, $p(c_{i}) = \gamma_{i} - \max\{0, \frac{\max\{0, \beta_{n+i} - \beta_{n+i+1}\}}{2}\} - \max\{0, \frac{\max\{0, \beta^{'}_{n+i} - \beta^{'}_{n+i+1}\}}{2}\} $ is derived.

Equalities~(\ref{number_pc1}) and (\ref{number_pcg}) can be obtained similar to the above calculations and \cite{meral19}, respectively.
\end{proof}

In the following lemma, we find the number of the upper diagonal components, $d_{2i-1}^{u}$, and the lower diagonal components, $d_{2i}^{l}$, in each region $G_{i} ~(1 \leq i \leq g-1)$.
\begin{lemma}\label{lem_diagonals}
Let $L$ be given with the intersection numbers  $(\alpha; \beta; \beta^{'}; \xi; \xi^{'}; \gamma; c; c^{*})$, and the number of the upper diagonal components and the number of the lower diagonal components in $G_{i}$ be $d_{2i-1}^{u}$ and $d_{2i}^{l}$, respectively.  Then for  $1 \leq i \leq g-1$,	
\begin{equation}\label{upper_diagonal}
d_{2i-1}^{u} = \max\{c_{i} - |T_{i}|, T_{i}c_{i}\} - \max\{0, T_{i}c_{i}\}  
\end{equation}
\noindent and	
\begin{equation}\label{lower_diagonal}
d_{2i}^{l} = \max\{c_{i} - |T_{i}|, -T_{i}c_{i}\} - \max\{0, -T_{i}c_{i}\}.  
\end{equation}	
	
\end{lemma}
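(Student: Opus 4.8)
The plan is to reduce both identities to the two structural facts already in hand. By Remark~\ref{assuming} at most one diagonal type occurs in $G_i$, so at least one of $d_{2i-1}^{u}, d_{2i}^{l}$ is identically $0$; and by Remark~\ref{when_diagonal_exist}, once a diagonal is present the balance relation $c_i = d_{2i-1}^{u} + d_{2i}^{l} + |T_i|$ holds together with $|T_i| \le c_i$. Hence the surviving diagonal count is forced to be $c_i - |T_i| \ge 0$, and the whole task becomes showing that the two $\max$-expressions (a) use the sign of the signed twist $T_i$ supplied by Lemma~\ref{lem_total_twist_gen} to decide which count survives, and (b) return $c_i - |T_i|$ for that one and $0$ for the other.

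First I would fix the geometric dictionary between twisting direction and diagonal type. Under the normalization of Remark~\ref{assuming} --- all diagonal and twist strands meet $\xi_{2i-1}, \xi_{2i}$ and the invisible parts lie on the invisible-left side --- one reads off Figures~\ref{diagonal_left_right_loop}c--d and~\ref{twist_components_gen} that an upper diagonal can coexist only with clockwise (\emph{negative}) twisting and a lower diagonal only with counterclockwise (\emph{positive}) twisting; any other pairing would create a self-intersection, impossible for a multiple curve. Thus $d_{2i-1}^{u} \neq 0$ forces $T_i < 0$ while $d_{2i}^{l} \neq 0$ forces $T_i > 0$. I expect this correspondence to be the main obstacle: it is the only genuinely topological input and must be justified from the figures rather than by algebra.

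Granting the dictionary, the remaining verification is a short sign analysis of $T_i$ on the diagonal regime $|T_i| \le c_i$. If $T_i < 0$ then $T_i c_i \le 0$, so $\max\{0, T_i c_i\} = 0$ and, because $c_i - |T_i| \ge 0 \ge T_i c_i$, the expression for $d_{2i-1}^{u}$ collapses to $c_i - |T_i|$; at the same time $-T_i c_i \ge 0$ dominates $c_i - |T_i|$ and cancels $\max\{0, -T_i c_i\}$, so $d_{2i}^{l} = 0$. The case $T_i > 0$ is the mirror image and gives $d_{2i-1}^{u} = 0$ and $d_{2i}^{l} = c_i - |T_i|$. In each case the product $T_i c_i$ functions purely as a sign-controlled switch that annihilates the wrong diagonal, which is precisely why the signed $T_i$, and not only $|T_i|$, must appear in the formulas.

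It remains to check the boundary and degenerate inputs. When $c_i = 0$, Lemma~\ref{lem_total_twist_gen} gives $T_i = 0$ and both expressions return $0$, as they must. At the boundary $|T_i| = c_i$ the surviving count is $0$, and a direct substitution confirms both expressions vanish. Finally, outside the diagonal regime (where $|T_i| > c_i$) no diagonal can be present by Remark~\ref{when_diagonal_exist}, so $d_{2i-1}^{u} = d_{2i}^{l} = 0$; I would therefore record that the formulas are to be applied on $|T_i| \le c_i$, with the complementary regime read off directly, and verify consistency there. Assembling the sign dictionary with this case analysis yields both identities.
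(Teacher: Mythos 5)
Your proposal is correct and follows essentially the same route as the paper's proof: both rest on Remark~\ref{assuming} (only one diagonal type per $G_i$), the balance relation of Remark~\ref{when_diagonal_exist}, and the correspondence between the sign of $T_i$ and which diagonal type can occur, after which the $\max$-expressions are checked case by case. Your write-up is in fact more explicit than the paper's (which simply asserts that the formulas ``provide these properties completely''), since you carry out the sign analysis of $T_i c_i$ and the boundary cases $c_i=0$ and $|T_i|=c_i$ in detail.
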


\begin{proof}
	Firstly, we assume that there are upper diagonal components in the region $G_{i}$.  When  $T_{i} < 0$, from Remark~\ref{when_diagonal_exist}, we see $ d_{2i-1}^{u} + d_{2i}^{l} = c_{i} - |T_{i}|$. From Remark~\ref{assuming}, $G_{i}$ has no lower diagonal components. Therefore, we can write $ d_{2i-1}^{u} = c_{i} - |T_{i}|$. When $T_{i} > 0$, it should be $ d_{2i-1}^{u} = 0$ so that the curves do not intersect  each other. The equation~(\ref{upper_diagonal}) provides these properties completely.
	
	When there are lower diagonal components in $G_{i}$, we can find the equation~(\ref{lower_diagonal}) similar to the number of upper diagonal components.
\end{proof}

The twist numbers of each twist component of a multiple curve whose intersection numbers are given are found by using Remark~\ref{not_greater_1} and Lemma~\ref{lem_total_twist_gen}, which we find these twist numbers with the following lemma. The proof of this lemma is similar to the proof in \cite{meral19}.
\begin{lemma}\label{each_twist}
	Let $L$ be given with the intersection numbers  
	$(\alpha; \beta; \beta^{'}; \xi; \xi^{'}; \gamma; c; c^{*})$. 
	Let $|T_{i}| ~(1 \leq i \leq g-1)$ and $|T_{g}|$ be the total twist numbers
	 in each regions $G_{i}$ and $G^{*}$, respectively.
	  Also, let $m_{i}$ and $m^{*}$ be the number of twist components, each with 
	   $t_{i}+1$ and $t_{g}+1$ twists and 
	  $c_{i} - d^{u}_{2i-1} - d^{l}_{2i} - m_{i}$ and $c^{*} - m^{*}$ be 
	  the number of twist components, each with  $t_{i}$ and $t_{g}$ twists
	   in each $G_{i}$ and $G^{*}$, respectively. 
	   In this case,
	
	\begin{equation}
	m_{i} \equiv |T_{i}| ~(mod ~(c_{i} - d^{u}_{2i-1} - d^{l}_{2i}))  \quad \text{ and } \quad t_{i} = \frac{|T_{i}| - m_{i}}{c_{i} - d^{u}_{2i-1} - d^{l}_{2i}}
	\end{equation}
	\noindent and
	\begin{equation}
	m^{*} \equiv |T_{g}| ~(mod ~c^{*})  \quad \text{ and } \quad t_{g} = \frac{|T_{g}| - m^{*}}{c^{*}},
	\end{equation}
	\noindent where $c_{i} - d^{u}_{2i-1} - d^{l}_{2i} \neq 0$ and $c^{*} \neq 0$. 
\end{lemma}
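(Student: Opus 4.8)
The plan is to read the two claimed formulas off the total-twist expression in Remark~\ref{not_greater_1}, recognizing them as nothing more than the statement that Euclidean division of $|T_{i}|$ by the number of twist components produces quotient $t_{i}$ and remainder $m_{i}$. Write $N_{i} = c_{i} - d^{u}_{2i-1} - d^{l}_{2i}$ for the number of twist components in $G_{i}$; by Equation~(\ref{number_cutting}) this is exactly the count $c^{'}_{i}$ of twist components, and it is nonzero by hypothesis. When the two twist numbers that occur differ by $1$, Remark~\ref{not_greater_1} gives
\begin{equation*}
|T_{i}| = m_{i}(t_{i}+1) + (N_{i} - m_{i})t_{i},
\end{equation*}
and expanding the right-hand side collapses this to $|T_{i}| = N_{i}t_{i} + m_{i}$. (The case in which all twist numbers agree is the subcase $m_{i}=0$ of the first line of Remark~\ref{not_greater_1}.)

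It then remains only to pin down the range of $m_{i}$, so that this identity determines $t_{i}$ and $m_{i}$ uniquely. Here $t_{i}$ is by definition the \emph{smaller} of the two twist numbers, so at least one twist component carries exactly $t_{i}$ twists; hence the number $N_{i}-m_{i}$ of such components is at least $1$, which forces $0 \le m_{i} \le N_{i}-1$. Thus $|T_{i}| = N_{i}t_{i} + m_{i}$ with $0 \le m_{i} < N_{i}$ is precisely the division of $|T_{i}|$ by $N_{i}$, and uniqueness of quotient and remainder yields
\begin{equation*}
m_{i} \equiv |T_{i}| \pmod{N_{i}}, \qquad t_{i} = \frac{|T_{i}| - m_{i}}{N_{i}},
\end{equation*}
which is the asserted formula once $N_{i}$ is written out as $c_{i} - d^{u}_{2i-1} - d^{l}_{2i}$. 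For the region $G^{*}$ the argument is identical; the only change is that $G^{*}$ contains no diagonal components, so the number of twist components is simply $c^{*}$, and substituting $N_{g}=c^{*}$ gives the second pair of formulas.

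I expect no genuine obstacle in the computation itself, which is just the one-line algebraic collapse above. The only point that requires care — and the one on which the whole argument turns — is the justification of the inequality $0 \le m_{i} < N_{i}$ from the convention that $t_{i}$ denotes the smaller twist number together with the same-direction constraint of Remark~\ref{not_greater_1}; this is exactly what converts the algebraic identity into a valid, and hence unique, Euclidean division. The degenerate possibility $N_{i}=0$ (no twist components at all) is excluded by the hypotheses $c_{i} - d^{u}_{2i-1} - d^{l}_{2i} \neq 0$ and $c^{*} \neq 0$, which is also precisely what makes the two divisions well defined.
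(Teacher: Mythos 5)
Your proof is correct and is essentially the argument the paper intends: the paper itself only defers to the proof in \cite{meral19}, but that proof is exactly the Euclidean-division reading of the identity $|T_{i}| = m_{i}(t_{i}+1) + (c_{i} - d^{u}_{2i-1} - d^{l}_{2i} - m_{i})t_{i}$ from Remark~\ref{not_greater_1}. Your added justification that $0 \le m_{i} < c_{i} - d^{u}_{2i-1} - d^{l}_{2i}$ (since $t_{i}$ is the smaller twist number, at least one component carries exactly $t_{i}$ twists) is the right point to make explicit, as it is what guarantees uniqueness of the quotient and remainder.
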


In Lemma~\ref{looking_right_left}, we shall define some auxiliary components that shall be used to calculate the number of the visible above components denoted by $u_{2i-1}^{va}$ and the number of the visible below components denoted by $u_{2i}^{vb}$ in the rest of the paper.

\begin{lemma}\label{looking_right_left}
Let $L$ be given with the intersection numbers  $(\alpha; \beta; \beta^{'}; \xi; \xi^{'}; \gamma; c; c^{*})$. Then for  $1 \leq i \leq g-1$,	if $\beta_{n+i} \leq \beta_{n+i+1}$, the number of the intersections of twist components together with total diagonals with the arc $\beta_{n+i+1}$, denoting by $n_{i}$, is as follows.
\begin{equation}
n_{i} = \frac{\beta_{n+i+1} - \beta_{n+i} + c_{i}}{2} - \max\{0, \frac{|\beta_{n+i} - \beta_{n+i+1}| - c_{i}}{2}\}.
\end{equation}

\noindent Hence, we can find the number of the intersections of twist components together with total diagonals with the arc $\beta_{n+i}$ as $c_{i} - n_{i}$.

On the other hand, if $\beta_{n+i} \geq \beta_{n+i+1}$, the number of the intersections of twist components together with total diagonals with the arc $\beta_{n+i}$, denoting by $k_{i}$, is as  follows.
\begin{equation}
k_{i} = \frac{\beta_{n+i} - \beta_{n+i+1} + c_{i}}{2} - \max\{0, \frac{|\beta_{n+i} - \beta_{n+i+1}| - c_{i}}{2}\}.
\end{equation}

 \noindent Hence, we can find the number of the intersections of twist components together with total diagonals with the arc $\beta_{n+i+1}$ as $c_{i} - k_{i}$.

\end{lemma}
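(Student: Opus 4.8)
The plan is to count, on the $G_i$ side, the endpoints of the path components of $L$ lying on each of the two visible boundary arcs $\beta_{n+i}$ and $\beta_{n+i+1}$, and then to isolate the twist-plus-diagonal contribution by a single subtraction. The point is that the intersection number of $L$ with a $\beta$-arc equals the number of endpoints of $G_i$-path components on that arc, so the whole statement reduces to bookkeeping of endpoints followed by elementary algebra.

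First I would record how each component type of $G_i$ meets the two visible arcs, reading off the classification preceding Remark~\ref{assuming}: a visible-left genus component puts two endpoints on $\beta_{n+i+1}$ and a visible-right genus component puts two on $\beta_{n+i}$; each visible above and each visible below component puts exactly one endpoint on \emph{each} of $\beta_{n+i}$ and $\beta_{n+i+1}$; each upper or lower diagonal component has its single visible endpoint on $\beta_{n+i+1}$ (its other endpoint being on the invisible arc $\beta^{'}_{n+i}$); and, by Remark~\ref{assuming}, each twist component has its single visible endpoint on exactly one of $\beta_{n+i}$ or $\beta_{n+i+1}$. The longitude $c_i$ and every invisible component contribute nothing to either visible arc. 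In the case $\beta_{n+i}\le\beta_{n+i+1}$, Lemma~\ref{lem_genus_loop_gen} tells us the visible genus components are left and number $l_i$, so writing $n_i$ for the number of visible twist-and-diagonal endpoints on $\beta_{n+i+1}$ and using Equation~(\ref{number_cutting}) gives
\[
\beta_{n+i+1} = 2 l_i + u^{va}_{2i-1} + u^{vb}_{2i} + n_i \quad\text{and}\quad \beta_{n+i} = u^{va}_{2i-1} + u^{vb}_{2i} + (c_i - n_i).
\]
The complementary count $c_i - n_i$ on $\beta_{n+i}$ is forced because every one of the $c_i$ twist-and-diagonal components has exactly one visible endpoint, distributed between the two arcs; this is precisely the last assertion of the lemma.

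Next I would subtract the second identity from the first. The visible above and visible below terms cancel, leaving $\beta_{n+i+1} - \beta_{n+i} = 2 l_i + 2 n_i - c_i$, whence $n_i = \tfrac{\beta_{n+i+1}-\beta_{n+i}+c_i}{2} - l_i$. Substituting $l_i = \max\{0, \tfrac{|\beta_{n+i}-\beta_{n+i+1}|-c_i}{2}\}$ from Lemma~\ref{lem_genus_loop_gen} yields the stated formula for $n_i$. The case $\beta_{n+i}\ge\beta_{n+i+1}$ is symmetric: now the visible genus components are right, Remark~\ref{assuming} forbids diagonal components (so $c_i=c^{'}_i$ and the diagonals drop out), and the analogous subtraction produces $k_i = c_i - n_i = \tfrac{\beta_{n+i}-\beta_{n+i+1}+c_i}{2} - l_i$, which becomes the stated formula after the same substitution.

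The part requiring the most care is the endpoint bookkeeping behind the two displayed identities: verifying that the visible above/below families contribute symmetrically to the two arcs (so that they cancel in the difference), that by Lemma~\ref{lem_genus_loop_gen} only one of the visible-left / visible-right families is present in a given $G_i$, and that the diagonal components sit entirely on the $\beta_{n+i+1}$ side while each twist component contributes a single visible endpoint. Once these structural facts are secured, the remaining algebra and the substitution of $l_i$ are immediate, and the complementary expressions $c_i-n_i$ and $c_i-k_i$ follow directly from the total count $c_i$ of twist-and-diagonal components.
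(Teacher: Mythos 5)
Your proposal is correct and follows essentially the same route as the paper: you set up the same two endpoint-counting identities $\beta_{n+i+1} = n_i + 2l_i + u^{va}_{2i-1} + u^{vb}_{2i}$ and $\beta_{n+i} = c_i - n_i + u^{va}_{2i-1} + u^{vb}_{2i}$, subtract to get $n_i = \tfrac{\beta_{n+i+1}-\beta_{n+i}+c_i}{2} - l_i$, and substitute the expression for $l_i$ from Lemma~\ref{lem_genus_loop_gen}, treating the case $\beta_{n+i}\ge\beta_{n+i+1}$ symmetrically. Your more explicit bookkeeping of which component types place endpoints on which visible arc is a welcome elaboration of what the paper leaves implicit, but the argument is the same.
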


\begin{proof}
When $\beta_{n+i} \leq \beta_{n+i+1}$, we can write the number of the intersections on the arcs  $\beta_{n+i+1}$ and $\beta_{n+i}$ as follows:
\begin{equation}\label{right1}
\beta_{n+i+1} = n_{i} + 2l_{i} + u_{2i-1}^{va} + u_{2i}^{vb},
\end{equation}
\begin{equation}\label{left1}
\beta_{n+i} = c_{i} - n_{i} + u_{2i-1}^{va} + u_{2i}^{vb}.
\end{equation}
From equations ~(\ref{right1}) and ~(\ref{left1}), we derive 
\begin{equation*}
n_{i} = \frac{\beta_{n+i+1} - \beta_{n+i} + c_{i}}{2} - l_{i}.
\end{equation*}

\noindent When $\beta_{n+i} \geq \beta_{n+i+1}$, we can find $k_{i}$ similar to $n_{i}$.

\end{proof}

\begin{remark}\label{remark_loop}
	In each region $U_i$,  for $1 \leq i \leq n$, let the number of the loop components be denoted by $|b_i|$, where 
	\begin{equation}
	b_{i} = \frac{\beta_{i} - \beta_{i+1}}{2}.
	\end{equation}
	If $b_i < 0$, the loop component is called \emph{left}; if $b_i > 0$, the loop component is called \emph{right} \cite{dynnikov02}.
\end{remark}

Now, we find the number of above and below components in each $U_i ~(1 \leq i \leq n)$ and the number of visible above, visible below, invisible above and invisible below components in each $G_i ~(1 \leq i \leq g-1)$.
\begin{lemma}\label{all_above_below}
Let $L$ be given with the intersection numbers  $(\alpha; \beta; \beta^{'}; \xi; \xi^{'}; \gamma; c; c^{*})$. Also, let the number of above and below components in each $U_i$ and the number of visible above, visible below, invisible above and invisible below components in each $G_i$ be denoted by $u_{2i-1}^{a}, ~u_{2i}^{b}, ~u_{2i-1}^{va}, ~u_{2i}^{vb}, ~u_{2i-1}^{v'a}$ and $u_{2i}^{v'b}$, respectively. Then for $1 \leq i \leq n$,
\begin{equation}\label{above_below_puncture}
u_{2i-1}^{a} = \alpha_{2i-1} - |b_i| \quad \mbox{ and } \quad u_{2i}^{b} =  \alpha_{2i} - |b_i|.
\end{equation}

 For $1 \leq i \leq g-1$, if $|T_i| \neq 0$,

\noindent when $\beta_{n+i} \leq \beta_{n+i+1}$,
\begin{equation}\label{denk_1_ab_vis_dif_zero}
u_{2i-1}^{va} = \xi_{2i-1} - |T_i| - \max\{n_i - d_{2i}, T_i\} + \max\{0, T_i\} - l_i,
\end{equation}
\begin{equation}\label{denk_1_bel_vis_dif_zero}
u_{2i}^{vb} = \xi_{2i} - |T_i| - \max\{n_i - d_{2i-1}, -T_i\} + \max\{0, -T_i\} - l_i;
\end{equation}

\noindent when $\beta_{n+i} \geq \beta_{n+i+1}$,
\begin{equation}\label{denk_2_ab_vis_dif_zero}
u_{2i-1}^{va} = \xi_{2i-1} - |T_i| - \max\{c_i - k_i - d_{2i}, T_i\} + \max\{0, T_i\} - l_i,
\end{equation}
\begin{equation}\label{denk_2_bel_vis_dif_zero}
u_{2i}^{vb} = \xi_{2i} - |T_i| - \max\{c_i - k_i - d_{2i-1}, -T_i\} + \max\{0, -T_i\} - l_i.
\end{equation}

If $|T_i| = 0$,
\begin{equation}\label{denk_ab_vis_zero}
u_{2i-1}^{va} = \xi_{2i-1} - \max\{p(c_i), d_{2i-1}\} - l_i,
\end{equation}
\begin{equation}\label{denk_bel_vis_zero}
u_{2i}^{vb} = \xi_{2i} - \max\{p(c_i), d_{2i}\} - l_i.
\end{equation}

Also,
\begin{equation}
u_{2i-1}^{v'a} = \xi^{'}_{2i-1} - l^{'}_i \quad \mbox{ and } \quad u_{2i}^{v'b} = \xi^{'}_{2i} - l^{'}_i.
\end{equation}
	
\end{lemma}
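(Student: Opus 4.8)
The plan is to prove every one of these identities by a single \emph{crossing-count} bookkeeping argument. For each arc occurring on a right-hand side I write its total intersection number with $L$ as the sum, over all types of path component that can cross that arc, of (the number of such components) times (the number of times one such component crosses the arc); I then solve the resulting linear relation for the unknown component count. All the auxiliary counts that feed into these relations are already known: the loop numbers $|b_i|$ from Remark~\ref{remark_loop}, the visible and invisible genus numbers $l_i,l^{'}_i$ from Lemma~\ref{lem_genus_loop_gen}, the total twists $|T_i|$ from Lemma~\ref{lem_total_twist_gen}, the diagonal numbers $d^{u}_{2i-1},d^{l}_{2i}$ from Lemma~\ref{lem_diagonals}, the curve numbers $p(c_i)$ from Lemma~\ref{c_curves_gen}, and the boundary-crossing splits $n_i,k_i$ from Lemma~\ref{looking_right_left}. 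So each formula reduces to reading the crossing pattern on the relevant arc off the figures and substituting.

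First I would dispose of the puncture regions. In $U_i$ the only components meeting $\alpha_{2i-1}$ are the above components (once each, by definition) and the loop components (once each, since a loop crosses both $\alpha_{2i-1}$ and $\alpha_{2i}$, Figure~\ref{puncture_bilesen_gen}); hence $\alpha_{2i-1}=u_{2i-1}^{a}+|b_i|$, which gives $u_{2i-1}^{a}=\alpha_{2i-1}-|b_i|$, and symmetrically $u_{2i}^{b}=\alpha_{2i}-|b_i|$, exactly as on the disk \cite{dynnikov02}. Next I would treat the invisible side of $G_i$: by the conventions of Remark~\ref{assuming} all diagonal and twist components are pushed onto the visible side, so the arc $\xi^{'}_{2i-1}$ is crossed only by invisible above components and by invisible genus components, once each, giving $\xi^{'}_{2i-1}=u_{2i-1}^{v'a}+l^{'}_i$ and hence $u_{2i-1}^{v'a}=\xi^{'}_{2i-1}-l^{'}_i$; the same count with $\xi^{'}_{2i}$ yields $u_{2i}^{v'b}=\xi^{'}_{2i}-l^{'}_i$.

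The substance of the lemma is the visible above/below count in $G_i$, where $\xi_{2i-1}$ is crossed by visible above components, by visible genus components ($l_i$), and, depending on the configuration, by $c_i$-curves, upper diagonal components, and twist components. I would organise this by the trichotomy of the statement. If $|T_i|=0$ there are no twists, and by Remark~\ref{not_c_cutting_gen} the $c_i$-curves and the diagonals cannot coexist; each $c_i$-curve and each upper diagonal meets $\xi_{2i-1}$ once, so $\xi_{2i-1}=u_{2i-1}^{va}+\max\{p(c_i),d^{u}_{2i-1}\}+l_i$, which is~(\ref{denk_ab_vis_zero}), and likewise~(\ref{denk_bel_vis_zero}). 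If $|T_i|\neq 0$ there are no $c_i$-curves (Remark~\ref{not_c_cutting_gen}); now the twist and diagonal strands crossing $\xi_{2i-1}$ must be separated from those crossing $\xi_{2i}$, and the split depends on the twist direction. The total number of twist-plus-diagonal strands on the $\beta_{n+i+1}$ side is $n_i$ (resp. $c_i-k_i$ on the $\beta_{n+i}$ side) by Lemma~\ref{looking_right_left}, while Lemma~\ref{lem_diagonals} fixes $d^{u}_{2i-1},d^{l}_{2i}$; substituting these and recording, through the sign of $T_i$, which strands run alongside $\xi_{2i-1}$ produces the correction $|T_i|+\max\{n_i-d^{l}_{2i},T_i\}-\max\{0,T_i\}$ when $\beta_{n+i}\le\beta_{n+i+1}$ and the analogous expression with $c_i-k_i$ when $\beta_{n+i}\ge\beta_{n+i+1}$; subtracting also $l_i$ gives~(\ref{denk_1_ab_vis_dif_zero})–(\ref{denk_2_bel_vis_dif_zero}), and the below counts are the mirror statements with $T_i$ replaced by $-T_i$ and $d^{l}_{2i}$ by $d^{u}_{2i-1}$.

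The hard part will be exactly this last separation: determining, as a function of the twist direction and of the relative size of $\beta_{n+i}$ and $\beta_{n+i+1}$, how the $|T_i|$ twisting strands and the diagonal strands distribute between $\xi_{2i-1}$ and $\xi_{2i}$ without creating self-intersections. Here one must verify the $\max$-expressions figure by figure (Figures~\ref{diagonal_left_right_loop}c–f and~\ref{twist_components_gen}), checking that for $T_i>0$ the positive-twist and upper-diagonal strands are the ones met by $\xi_{2i-1}$ while for $T_i<0$ they are met by $\xi_{2i}$, so that the $\max\{0,\pm T_i\}$ term removes precisely the doubly counted strands. Once these four sign-and-orientation cases are matched against the figures, each displayed identity is immediate, and the outer $\max\{0,\cdot\}$ truncations simply enforce non-negativity of the counts, exactly as in Lemma~\ref{lem_genus_loop_gen}.
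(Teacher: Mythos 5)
Your proposal is correct and follows essentially the same route as the paper: each identity is obtained by writing the intersection number on the relevant arc ($\alpha_{2i-1}$, $\xi_{2i-1}$, $\xi^{'}_{2i-1}$, etc.) as a sum over component types and solving for the unknown count, with the previously established quantities $|b_i|$, $l_i$, $l^{'}_i$, $|T_i|$, $d^{u}_{2i-1}$, $d^{l}_{2i}$, $p(c_i)$, $n_i$, $k_i$ feeding in, and with the same case split on the sign of $T_i$ producing the $\max$-corrections. The figure-by-figure verification you flag as the ``hard part'' is handled in the paper at the same level of detail, namely by direct inspection of Figures~\ref{diagonal_left_right_loop} and~\ref{twist_components_gen}.
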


\begin{proof}
	The proofs of Equations~(\ref{above_below_puncture}) are obvious since each above and below component intersects $\alpha_{2i-1}$ and $\alpha_{2i}$, respectively (see Figure~\ref{puncture_bilesen_gen}).
	
	Let $|T_i| \neq 0$. When $\beta_{n+i} \leq \beta_{n+i+1}$, from Lemma~\ref{looking_right_left}, the number of the intersections of twist components together with total diagonal components with the arc $\beta_{n+i+1}$ is $n_i$. When we subtract the number of lower diagonal components (Figure~\ref{diagonal_left_right_loop}d) from $n_i$, the arc $\xi_{2i-1}$ intersects $n_i - d_{2i}$ times with the twist components. The arc $\xi_{2i-1}$ also intersects $l_i$ times with the visible genus components (Figures~\ref{diagonal_left_right_loop}a and \ref{diagonal_left_right_loop}b), $u_{2i-1}^{va}$ times with the visible above components (Figure~\ref{diagonal_left_right_loop}e), and by the total number of twists, $|T_i|$. When $T_i > 0$, $\xi_{2i-1}$ intersects by the total number of twists; whereas when $T_i < 0$, $\xi_{2i-1}$ intersects by the total number of twists and $n_i - d_{2i}$. That is,
	$$ \xi_{2i-1} = |T_i| + \max\{n_i - d_{2i}, T_i\} - \max\{0, T_i\} + l_i + u_{2i-1}^{va}.$$  \noindent Hence, we get Equation~(\ref{denk_1_ab_vis_dif_zero}) as follows. $$ u_{2i-1}^{va} = \xi_{2i-1} - |T_i| - 
	\max\{n_i - d_{2i}, T_i\} +  \max\{0, T_i\} - l_i. $$
	
	Similarly, in addition to the number of visible genus components and visible below components, when $T_i > 0$, $\xi_{2i}$ intersects by the total number of twists and $n_i - d_{2i-1}$; whereas when $T_i < 0$, $\xi_{2i}$ intersects by the total number of twists. Hence, $$ \xi_{2i} =  |T_i| + 
	\max\{n_i - d_{2i-1}, -T_i\} - \max\{0, -T_i\} + l_i + u_{2i}^{vb}. $$ \noindent From here, we can write Equation~(\ref{denk_1_bel_vis_dif_zero}) as follows.
	 $$ u_{2i}^{vb} = \xi_{2i} - |T_i| - \max\{n_i - d_{2i-1}, -T_i\} + 
	 \max\{0, -T_i\} - l_i. $$
	 
	 When $\beta_{n+i} \geq \beta_{n+i+1}$, we can derive the  Equations~(\ref{denk_2_ab_vis_dif_zero}) and (\ref{denk_2_bel_vis_dif_zero}) similar to the Equations~(\ref{denk_1_ab_vis_dif_zero}) and (\ref{denk_1_bel_vis_dif_zero}) using Lemma~\ref{looking_right_left}.
	 
	 Let $|T_i| = 0$. In this case, in addition to  visible genus components and visible above components, $\xi_{2i-1}$ intersects either the curve $c_i$ or the upper diagonal components (see Remark~\ref{not_c_cutting_gen}). That is, 
	 $$ \xi_{2i-1} = u_{2i-1}^{va} + \max\{p(c_i), d_{2i-1}\} + l_i. $$ \noindent Thus, we find Equation~(\ref{denk_ab_vis_zero}) as $u_{2i-1}^{va} = \xi_{2i-1} - \max\{p(c_i), d_{2i-1}\} - l_i.$ Similarly, $u_{2i}^{vb}$ is derived.
	 
	 From Remark~\ref{assuming}, $\xi^{'}_{2i-1}$ intersects only invisible genus components and invisible above components, and $\xi^{'}_{2i}$ intersects only invisible genus components and invisible below components. Thus, we can write 
	 $$ u^{v'a}_{2i-1} = \xi^{'}_{2i-1} - l'_i \quad \mbox{ and } \quad  u^{v'b}_{2i} = \xi^{'}_{2i} - l'_i.$$ 
	\end{proof}

\begin{example}\label{exp}
Let $(6, 2, 4, 2, 5, 1; 8, 6, 4, 6, 7, 2; 3, 0; 5, 4, 6, 6; 4, 1, 0, 0; 2, 5, 3; 3, 3; 0)$ be the intersection numbers of a multiple curve $L \in \I_{3,3}$ with the corresponding arcs and the simple closed curves $c_i$ and $c^{*}$ in $S_{3,3}$. Also, $T_{1} > 0$ and  $T_{2} < 0$.  We shall show how we draw $L$ from the given intersection numbers. 	

First, we  find the number of each path component in each region $G_i$  for $i = 1, 2$ and $G^{*}$, respectively. From Lemma~\ref{lem_genus_loop_gen}, 
\begin{eqnarray*}
	l_{1} = \max\{0, \frac{|\beta_{4} - \beta_{5}| - c_{1}}{2}\} = \max\{0, \frac{|6 - 7| - 3}{2}\} = 0.
\end{eqnarray*}
	
\noindent Similarly, we have  $l_{2} = 1$, $l_{3} = 1$, $l^{'}_{1} = 1$, $l^{'}_{2} = 0$ and $l^{'}_{3} = 0$. Namely, there is $1$ right-invisible genus component, however there is not any visible genus component in the region $G_{1}$. In $G_{2}$, there is ~$1$ right-visible genus component and no invisible genus component. In $G^{*}$, there is ~$1$ visible genus component and no invisible genus component.  

According to Lemma~\ref{lem_total_twist_gen}, 
\begin{eqnarray*}
|T_{1}| = 2 - \max\{0, \frac{\max\{0, 6 - 7\} - 3}{2}\} - \max\{0, \frac{\max\{0, 8 - 3\} - 3}{2}\} = 1.
\end{eqnarray*}	
\noindent Similarly, $|T_{2}| = 4$ and since $c^{*} = 0$, $|T_{3}| = 0$. That is, the total twist number of the twist components in the region $G_1$ is $1$. The total twist number of the twist components in $G_2$ is 4, however there is not any twist in $G^{*}$. From Lemma~\ref{c_curves_gen}, we observe that since $c_1 \neq 0$ and $c_2 \neq 0$, there are no $c_1$ and $c_2$ curves in the regions $G_1$ and $G_2$. We have $p(c^{*}) = 2$. Therefore, there are $2$ $c^{*}$  curves in $G^{*}$.

We can find the number of upper and lower diagonal components using Lemma~\ref{lem_diagonals} in each $G_i, ~i=1,2$. We know that $T_1 > 0$. Thus, 
\begin{equation*}
d_{1}^{u} = \max\{c_{1} - |T_{1}|, T_{1}c_{1}\} - \max\{0, T_{1}c_{1}\} =  \max\{3 - 1, 1\times3\} - \max\{0, 1\times3\} = 0,
\end{equation*}

\begin{equation*}
d_{2}^{l} = \max\{c_{1} - |T_{1}|, -T_{1}c_{1}\} - \max\{0, -T_{1}c_{1}\} =  \max\{3 - 1, -1\times3\} - \max\{0, -1\times3\} = 2.
\end{equation*}	
\noindent While there are $2$ lower diagonal components in the region $G_1$, there are no upper diagonal components. From Remark~\ref{when_diagonal_exist}, since $|T_2|$ is greater than $c_2$, there are not both diagonal components in $G_2$.

We calculate the twist numbers of each twist component of $L$ in each $G_i$ and $G^{*}$ by Lemma~\ref{each_twist}. In $G_1$,
\begin{equation*}
m_{1} = |T_{1}| ~(mod ~(c_{1} - d^{u}_{1} - d^{l}_{2})) = 1 ~(mod ~(3 - 0 - 2)) = 0,
\end{equation*}

\begin{equation*}
t_{1} = \frac{|T_{1}| - m_{1}}{c_{1} - d^{u}_{1} - d^{l}_{2}} = \frac{1 - 0}{3 - 0 - 2} = 1
\end{equation*}
\noindent and
\begin{equation*}
c_{1} - d^{u}_{1} - d^{l}_{2} - m_{1} = 3 - 0 - 2 - 0 = 1.
\end{equation*}
\noindent Therefore, there is $1$ twist component which has $1$ twist, however there is not any twist component with $t_1 + 1 = 1 + 1 = 2$ twists in $G_1$. In $G_2$,
\begin{equation*}
m_{2} = |T_{2}| ~(mod ~(c_{2} - d^{u}_{3} - d^{l}_{4})) = 4 ~(mod ~(3 - 0 - 0)) = 1,
\end{equation*}

\begin{equation*}
t_{2} = \frac{|T_{2}| - m_{2}}{c_{2} - d^{u}_{3} - d^{l}_{4}} = \frac{4 - 1}{3 - 0 - 0} = 1
\end{equation*}
\noindent and
\begin{equation*}
c_{2} - d^{u}_{3} - d^{l}_{4} - m_{2} = 3 - 0 - 0 - 1 = 2.
\end{equation*}
\noindent Thus, there are $2$ twist components, each with $1$ twist and $1$ twist component which has $t_2 + 1 = 1 + 1 = 2$ twists in $G_2$. Since $c^{*} = 0$, there is no twist in $G^{*}$.

According to Lemma~\ref{looking_right_left}, due to $\beta_4 < \beta_5$, 
\begin{equation*}
n_{1} = \frac{\beta_{5} - \beta_{4} + c_{1}}{2} - \max\{0, \frac{|\beta_{4} - \beta_{5}| - c_{1}}{2}\} = \frac{7 - 6 + 3}{2} - \max\{0, \frac{|6 - 7| - 3}{2}\} = 2.
\end{equation*}
\noindent Hence, the number of the intersections of twist components together with total diagonals with the arc $\beta_5$ in $G_1$ is $2$. The number of the intersections of twist components together with total diagonals with the arc $\beta_4$ in $G_1$ is $c_1 - n_1 = 3 - 2 = 1$.

In $G_2$, due to $\beta_5 > \beta_6$,
\begin{equation*}
k_{2} = \frac{\beta_{5} - \beta_{6} + c_{2}}{2} - \max\{0, \frac{|\beta_{5} - \beta_{6}| - c_{2}}{2}\} = \frac{7 - 2 + 3}{2} - \max\{0, \frac{|7 - 2| - 3}{2}\} = 3.
\end{equation*}
\noindent Thus, the number of the intersections of twist components together with total diagonals with the arc $\beta_5$ in $G_2$ is $3$. The number of the intersections of twist components together with total diagonals with the arc $\beta_6$ in $G_2$ is $c_2 - k_2 = 3 - 3 = 0$.

We find the loop components in each region $U_i, ~i=1,2,3$ by Remark~\ref{remark_loop}.
\begin{equation*}
b_{1} = \frac{\beta_{1} - \beta_{2}}{2} = \frac{8 - 6}{2} = 1,
\end{equation*}
\begin{equation*}
b_{2} = \frac{\beta_{2} - \beta_{3}}{2} = \frac{6 - 4}{2} = 1,
\end{equation*}
\begin{equation*}
b_{3} = \frac{\beta_{3} - \beta_{4}}{2} = \frac{4 - 6}{2} = -1.
\end{equation*}
\noindent Namely, there is $1$ right loop component in $U_1$, $1$ right loop component in $U_2$ and $1$ left loop component in $U_3$.

We calculate the number of above and below components in each $U_i ~(1 \leq i \leq 3)$ and the number of visible above, visible below, invisible above and invisible below components in each $G_i ~(1 \leq i \leq 2)$ using Lemma~\ref{all_above_below}.
\begin{equation*}
u_{1}^{a} = \alpha_{1} - |b_1| = 6 - 1 = 5, \quad u_{2}^{b} =  \alpha_{2} - |b_2| = 2 - 1 = 1,
\end{equation*}
\begin{equation*}
u_{3}^{a} = \alpha_{3} - |b_2| = 4 - 1 = 3, \quad u_{4}^{b} =  \alpha_{4} - |b_2| = 2 - 1 = 1,
\end{equation*}
\begin{equation*}
u_{5}^{a} = \alpha_{5} - |b_3| = 5 - 1 = 4, \quad u_{6}^{b} =  \alpha_{6} - |b_3| = 1 - 1 = 0.
\end{equation*}
\noindent Therefore, we have $5$ above components and $1$ below component in $U_1$, $3$ above components and $1$ below component in $U_2$ and $4$ above components and no below component in $U_3$. 

Since $|T_1| \neq 0$ and $\beta_{4} <  \beta_{5}$ in $G_1$,
\begin{eqnarray*}
	u_{1}^{va} & =  & \xi_{1} - |T_1| - \max\{n_1 - d_{2}, T_1\} + \max\{0, T_1\} - l_1 \\
	& = & 5 - 1 - \max\{2 - 2, 1\} + \max\{0, 1\} - 0 \\
	& = & 4
	\end{eqnarray*}
\noindent and 
\begin{eqnarray*}
u_{2}^{vb} & =  & \xi_{2} - |T_1| - \max\{n_1 - d_{1}, -T_1\} + \max\{0, -T_1\} - l_1 \\
            & =  & 4 - 1 - \max\{2 - 0, -1\} + \max\{0, -1\} - 0 \\
            & = & 1.
\end{eqnarray*}
\noindent Also,
\begin{equation*}
u_{1}^{v'a} = \xi^{'}_{1} - l^{'}_1 = 4 - 1 = 3 \quad \mbox{ and } \quad u_{2}^{v'b} = \xi^{'}_{2} - l^{'}_1 = 1 - 1 = 0.
\end{equation*}
\noindent There are  $4$ visible above components, $1$ visible below component, $3$ invisible above components and no invisible below component in $G_1$. 

Since $|T_2| \neq 0$ and $\beta_{5} >  \beta_{6}$ in $G_2$,
\begin{eqnarray*}
	u_{3}^{va} & =  & \xi_{3} - |T_2| - \max\{c_2 - k_2 - d_{4}, T_2\} + \max\{0, T_2\} - l_2 \\
	& = & 6 - 4 - \max\{3 - 3 - 0, -4\} + \max\{0, -4\} - 1 \\
	& = & 1
\end{eqnarray*}
\noindent and 
\begin{eqnarray*}
	u_{4}^{vb} & =  & \xi_{4} - |T_2| - \max\{c_2 - k_2 - d_{3}, -T_2\} + \max\{0, -T_2\} - l_2 \\
	& =  & 6 - 4 - \max\{3 - 3 - 0, 4\} + \max\{0, 4\} - 1 \\
	& = & 1.
\end{eqnarray*}
\noindent Also,
\begin{equation*}
u_{3}^{v'a} = \xi^{'}_{3} - l^{'}_2 = 0 - 0 = 0 \quad \mbox{ and } \quad u_{4}^{v'b} = \xi^{'}_{4} - l^{'}_2 = 0 - 0 = 0.
\end{equation*}
\noindent There are  $1$ visible above component, $1$ visible below component, no invisible above component and no invisible below component in $G_2$. The calculated path components in each $U_i$, $G_i$ and $G^{*}$ are connected in a unique way up to isotopy and thus,  the multiple curve $L$ in Figure~\ref{gen_example} is determined uniquely.
\begin{figure}[!ht]
	\centering
	\includegraphics[width=0.83\textwidth]{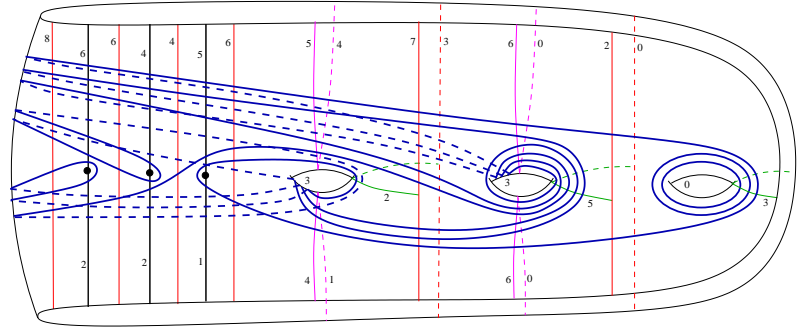}
	\caption{ The multiple curve $L$ with the intersection numbers $(6, 2, 4, 2, 5, 1; 8, 6, 4, 6, 7, 2; 3, 0; 5, 4, 6, 6; 4, 1, 0, 0; 2, 5, 3; 3, 3; 0)$ }\label{gen_example}
\end{figure} 

\end{example}

\bibliographystyle{amsplain}
\providecommand{\bysame}{\leavevmode\hbox
to3em{\hrulefill}\thinspace}
\providecommand{\MR}{\relax\ifhmode\unskip\space\fi MR }
\providecommand{\MRhref}[2]{%
  \href{http://www.ams.org/mathscinet-getitem?mr=#1}{#2}
} \providecommand{\href}[2]{#2}

\end{document}